\documentclass[11pt,a4paper]{amsart}

\usepackage[T1]{fontenc}
\usepackage[utf8]{inputenc}
\usepackage{lmodern}
\usepackage{microtype}
\usepackage{amsmath,amssymb,mathtools}
\usepackage{enumitem}
\usepackage[hidelinks]{hyperref}
\usepackage[nameinlink,capitalize,noabbrev]{cleveref}

\setlist[enumerate]{leftmargin=2.2em}

\newtheorem{theorem}{Theorem}[section]
\newtheorem{proposition}[theorem]{Proposition}
\newtheorem{corollary}[theorem]{Corollary}
\newtheorem{lemma}[theorem]{Lemma}
\theoremstyle{definition}
\newtheorem{definition}[theorem]{Definition}
\theoremstyle{remark}
\newtheorem{remark}[theorem]{Remark}

\crefname{theorem}{Theorem}{Theorems}
\crefname{proposition}{Proposition}{Propositions}
\crefname{corollary}{Corollary}{Corollaries}
\crefname{lemma}{Lemma}{Lemmas}
\crefname{definition}{Definition}{Definitions}
\crefname{remark}{Remark}{Remarks}

\newcommand{\End}{\operatorname{End}}
\newcommand{\Hom}{\operatorname{Hom}}
\newcommand{\Ker}{\operatorname{Ker}}
\newcommand{\CH}{\operatorname{CH}}
\newcommand{\Chow}{\operatorname{Chow}}
\newcommand{\Chowo}{\Chow^{\circ}}
\newcommand{\hcirc}[1]{h^{\circ}(#1)}
\newcommand{\QlZl}{\mathbb Q_\ell/\mathbb Z_\ell}
\newcommand{\id}{\mathrm{id}}

\title[Uniform Rost nilpotence and birational motives]{Uniform Rost nilpotence and birational motives}
\author{David Kumallagov}
\date{}

\subjclass[2020]{Primary 14C15; Secondary 14C25, 14E05, 14F20}
\keywords{Chow motives, Rost nilpotence, birational motives, generic descent,
decomposition of the diagonal, refined unramified cohomology}

\begin{document}

\begin{abstract}
For a field extension $E/k$ and a Chow motive $M$, let $$I_E(M)=\Ker\bigl(\End_k(M)\longrightarrow\End_E(M_E)\bigr)$$
be the base-change ideal. We establish explicit nilpotence bounds for these ideals in several geometric settings.

We also prove effective generic descent: if $M$ is a summand of $h(X)(a)$ and $M_{k(X)}$
 has uniform Rost exponent $s$, then $M$ has exponent $s(\dim X+1).$ This yields the integral exponent $2n-1$ for twisted Milnor hyperplane sections of dimension $2n-2.$

In characteristic zero, a weighted local-block refinement of the multilinear Rost filtration improves the uniform surface bounds of Gille and upgrades the elementwise estimates of Rosenschon–Sawant to ideal level uniform bounds. Using the Kahn–Sujatha description of pure birational motives, we obtain quantitative lifting results from birational to ordinary Chow motives, with applications to threefolds, including those birational to toric models and those admitting a decomposition of the diagonal supported on a surface.

Finally, we extend the Kok–Zhou detector from individual correspondences to entire base-change ideals, obtaining an explicit ideal nilpotence bound from uniform annihilation of the critical refined unramified cohomology groups.

\end{abstract}

\maketitle

\section{Introduction}

Let $X$ be a smooth projective variety over a field $k$.  The Rost
nilpotence principle predicts that every correspondence on $X$ which
vanishes after a field extension is nilpotent.  The principle is known for
several important classes of varieties, but remains open in general in
dimension at least three.  Two difficulties become visible as soon as one
tries to pass from individual correspondences to an entire base-change
kernel.  First, a nil-ideal need not be nilpotent.  Second, without
additional boundedness hypotheses, ordinary Rost nilpotence is not known to
be preserved by direct sums of motives; compare the bounded exponent result
\cite[Theorem~3.7]{Gille2026}.  Gille
introduced the strong form of the principle and proved that it is additive;
he also obtained the uniform ideal bounds $9$ for arbitrary smooth
projective integral surfaces in characteristic zero and $6$ for
geometrically rational surfaces
\cite[Theorem~3.3 and Sections~4.2--4.3]{Gille2026}.

The first purpose of this paper is to make the algebra of these bounds
explicit.  For a fixed field extension $E/k$, write
\[
 I_E(M)=\Ker\bigl(\End_k(M)\longrightarrow\End_E(M_E)\bigr).
\]
If $I_E(N_i)^{s_i}=0$, then
\[
 I_E\!\left(\bigoplus_iN_i\right)^{\sum_i s_i}=0.
\]
This gives a
quantitative refinement of the direct sum theorem.  We also prove that
\[
 I_E(M)^q\subseteq\langle N\rangle_M,\quad I_E(N)^s=0
 \quad\Longrightarrow\quad
 I_E(M)^{q(2s+1)}=0,
\]
where $\langle N\rangle_M$ is the ideal generated by factorisations through
$N$.  Thus a factorisation through a sum of $m$ motives of varieties of
dimension at most two gives the exponent $q(10m+1)$; for one surface this is
$11q$, and for one geometrically rational surface it is $9q$.  At the
element level, combining Rosenschon--Sawant with the ''square trick'' gives the
sharper universal bound $11q$.

The second purpose is to make generic descent effective.  The
Vishik--Zainoulline theorem gives
\[
 I_{k(X)}(h(X))^{\dim X+1}=0.
\]
We combine this with a common-overfield argument:  if a summand $M$ of
$h(X)(a)$ has uniform exponent $s$ after passage to $k(X)$, then $M$ already
has uniform exponent $s(\dim X+1)$ over $k$.  Thus generic Artin--Tate
splitting gives the explicit exponent $\dim X+1$.  In particular, the recent
strong Rost nilpotence theorem for twisted Milnor hyperplane sections of \cite{DeClercqMarthZainoulline} admits the
integer $2n-1$ as a uniform integral exponent.

The third purpose is geometric. The Kahn--Sujatha
formula reads
\[
 \Hom_{\Chowo(k,R)}(\hcirc X,\hcirc Y)
 \simeq \CH_0(Y_{k(X)};R).
\]
A correspondence supported on $Z\times Y$, with $Z\subset X$, then factors
through positive Tate twists of smooth projective models of $Z$.  Using the support-surjectivity lemma of Diaz
\cite{Diaz2022}, we make this factorisation precise without assuming that a
cycle lifts to one arbitrarily chosen resolution.

This leads to a uniform form of strong Rost nilpotence: one exponent works
simultaneously for all field extensions.  We prove it for threefolds whose
pure birational motive is a retract of a finite sum of birational motives of
dimension at most two, and for threefolds whose generic point is supported
on a surface; equivalently, which admit the corresponding integral
decomposition of the diagonal.  The latter strengthens the ordinary Rost
nilpotence conclusion obtained by Diaz \cite{Diaz2019}.  The
blow-up formula then shows that uniform strong Rost nilpotence is a
birational invariant in dimensions at most four.

The fourth purpose is to strengthen Gille's threefold theorem
\cite{Gille2018}.  For a base-change kernel $J$ we consider its image
$J^\circ$ in the endomorphism ring of the pure birational motive.  A
weighted local-block form of Rost's filtration shows that
\[
 (J^\circ)^r=0\quad\Longrightarrow\quad J^{r+5}=0
\]
for a smooth projective geometrically integral threefold in characteristic
zero.  The Brown--Gersten--Quillen calculation of \cite{Gille2018} works for
two different correspondences and gives $(J^\circ)^2=0$ for a
three-dimensional toric model.  The condition is birationally invariant
with the same exponent, so every threefold birational to such a model has
uniform strong exponent $7$.  Gille's original conclusion was elementwise,
and his stated exponent $8$ contains an avoidable final factor in the Rost
filtration.  The same sharpening gives the surface exponents $5$ and $4$
and upgrades Gille's projective-bundle theorem from ordinary to uniform
strong Rost nilpotence in arbitrary rank.

The last section concerns refined unramified cohomology.  Kok and Zhou prove
that, for a correspondence killed by a finite Galois extension, nilpotence is
detected by a family of critical refined unramified groups, with one bound
independent of $\ell$ \cite{KokZhou}.  We pass from one correspondence to the
whole base-change kernel attached to a finite extension.  The
Hochschild--Serre filtration and the
equivariant exact sequence of Kok--Zhou give the explicit ideal bound
$J^{N+2d+1}=0$.

\medskip

\section{Chow motives and quantitative lifting}

Unless stated otherwise, $k$ is a field and $R$ is a commutative coefficient
ring with identity.  We write $\CH^i(-;R)=\CH^i(-)\otimes_{\mathbb Z}R$.
A variety is integral and of finite type over $k$.  Smooth projective
schemes with finitely many connected components are treated additively.  We
use the category $\Chow(k,R)$ of Chow motives with arbitrary Tate twists and
the covariant convention
\begin{equation}\label{eq:covariant-convention}
 \Hom\bigl(h(X)(p),h(Y)(q)\bigr)
 =\CH^{\dim Y+q-p}(X\times_kY;R)
\end{equation}
for smooth projective integral $X$ and $Y$.  In particular, the graph of a
morphism $X\to Y$ defines a morphism $h(X)\to h(Y)$.  Composition is the
usual composition of correspondences, and we write $\alpha\beta$ for
$\alpha\circ\beta$.

For every extension $E/k$ and every $M\in\Chow(k,R)$, put
\[
 I_E(M)=\Ker\bigl(\End_k(M)\longrightarrow\End_E(M_E)\bigr).
\]

\begin{definition}\label{def:nilpotence-levels}
A motive $M\in\Chow(k,R)$ satisfies \emph{Rost nilpotence} if every element
of $I_E(M)$ is nilpotent for every extension $E/k$.  It satisfies
\emph{strong Rost nilpotence} if $I_E(M)$ is a nilpotent ideal for every
$E/k$.  It satisfies \emph{uniform strong Rost nilpotence} if there is one
integer $s\geq1$ such that
\[
 I_E(M)^s=0
\]
for every field extension $E/k$.  Such an $s$ will be called a uniform Rost
exponent for $M$.
\end{definition}

The following statement will be used repeatedly.

\begin{lemma}\label{lem:direct-sum-bound}
Fix $E/k$, let $N_1,\ldots,N_m\in\Chow(k,R)$, and assume
\[
 I_E(N_i)^{s_i}=0\qquad(1\leq i\leq m),
\]
where $s_i\geq1$.
Then, for $N=\bigoplus_{i=1}^mN_i$,
\[
 I_E(N)^{s_1+\cdots+s_m}=0.
\]

\end{lemma}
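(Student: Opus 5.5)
We would argue by induction on $m$, so the essential case is $m=2$: $N=N_1\oplus N_2$ with $I_E(N_1)^{s}=0$ and $I_E(N_2)^{t}=0$, and we want $I_E(N)^{s+t}=0$. Write $\End_k(N)$ as $2\times 2$ matrices $(f_{ij})$ with $f_{ij}\in\Hom_k(N_j,N_i)$. Base change is an additive functor compatible with the decomposition $N_E=(N_1)_E\oplus(N_2)_E$, so $f\in I_E(N)$ if and only if every entry $f_{ij}$ lies in $\Ker(\Hom_k(N_j,N_i)\to\Hom_E((N_j)_E,(N_i)_E))$. In particular the diagonal entries of such an $f$ lie in $I_E(N_1)$ and $I_E(N_2)$.

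Let $e_1,e_2$ be the projectors of $N$ onto $N_1,N_2$. For $f\in I_E(N)$ we have $e_ife_i\in e_iI_E(N)e_i$, and $e_iI_E(N)e_i$ identifies with $I_E(N_i)$. Hence for $f_1,\dots,f_s\in I_E(N)$,
\[
 e_1f_1e_1f_2e_1\cdots e_1f_se_1=0 .
\]
More generally, any product $g_1\cdots g_n$ with $g_j\in I_E(N)$, after inserting $\id=e_1+e_2$ between consecutive factors and at both ends, expands as a sum of words $e_{i_0}g_1e_{i_1}g_2\cdots g_ne_{i_n}$. Because the $g_j$ are arbitrary elements of the kernel, a word vanishes whenever it contains $s$ consecutive factors $e_1g e_1$, and similarly with $e_2$ and $t$.

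This reduces the problem to a combinatorial step, which is the main obstacle. A word in the index sequence $(i_0,\dots,i_n)$ of length $n=s+t$ need not contain $s$ consecutive $1$'s or $t$ consecutive $2$'s, since it may alternate. So we would first try a cleaner ideal-theoretic argument. Put $A=\End_k(N)$ and $J=I_E(N)$. Then $e_1Je_1$ is nilpotent of exponent $s$. Each $f\in J$ composed with the inclusion and projection through $N_2$ gives $fe_2$, and
\[
 J^{t}\cdot e_2 \subseteq \ldots
\]
is not directly controlled. The better route is to use the corner maps $x\mapsto e_ixe_i$. Any element of $Je_2J$ factors as $N\to N_2\to N$. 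For a product of $n$ elements of $J$, group the factors between successive visits to index $2$: each maximal run between two $2$'s becomes a single element of $e_2Je_2$-type, namely $e_2 g_a e_1g_{a+1}e_1\cdots e_1 g_b e_2$. This element lies in $e_2Je_2$, because $J$ is an ideal: $g_a e_1 g_{a+1}\cdots g_b\in J$. Similarly, any word restricted to the $1$-positions, with the $2$-stretches absorbed, gives elements of $e_1Je_1$.

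Concretely, if a word has at least $t$ occurrences of index $2$ among the interior positions $i_1,\dots,i_{n-1}$ together with the ends, we can rebracket it as a product of at least $t$ elements of $e_2Je_2$, possibly flanked by outer factors in $A$. Such a product is zero, since $I_E(N_2)^t=0$ and the flanking factors are harmless because $I_E(N_2)^t$ being zero kills the whole middle. Otherwise at least $s$ positions carry index $1$, and the same rebracketing gives a product of $s$ elements of $e_1Je_1$, which is zero. Counting the positions $i_0,\dots,i_n$ gives $n+1$ indices. Requiring that not both counts are small means $n+1\ge (s-1)+(t-1)+1$ suffices, so $s+t$ factors are more than enough. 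The care needed is checking that consecutive chosen $2$-positions are each separated by at least one factor $g$, so that each bracket really lies in $J$. The induction on $m$, using $N=(N_1\oplus\cdots\oplus N_{m-1})\oplus N_m$, then gives $s_1+\cdots+s_m$.
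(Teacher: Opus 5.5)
Your strategy is the right one, and it is presumably what the paper's ``direct and elementary matrix calculation'' amounts to: Peirce-decompose with respect to $e_1+e_2=\id$, expand $g_1\cdots g_n$ into words $e_{i_0}g_1e_{i_1}\cdots g_ne_{i_n}$, rebracket the stretch between two equal indices into a corner element, and pigeonhole. But the counting step, which is the whole content of the lemma, is off by one, and as written it proves something false.

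If the index $2$ occurs at $k$ positions $j_1<\cdots<j_k$, rebracketing produces only $k-1$ elements of $e_2Je_2$, one for each gap between consecutive occurrences. The two flanks, $e_{i_0}g_1\cdots g_{j_1}e_2$ on the left and $e_2g_{j_k+1}\cdots g_ne_{i_n}$ on the right, lie only in $Ae_2$ and $e_2A$, not in $e_2Je_2$. So $t$ occurrences of $2$ do not give a product of $t$ corner elements. Your inequality $n+1\geq(s-1)+(t-1)+1$ would yield $J^{s+t-2}=0$, which for $s=t=1$ asserts $J^0=0$.

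Concretely, take $A=\begin{pmatrix}\mathbb Z&\mathbb Z\\0&\mathbb Z\end{pmatrix}$ and $J=\begin{pmatrix}0&\mathbb Z\\0&0\end{pmatrix}$. Then $e_1Je_1=e_2Je_2=0$, yet the word $e_1E_{12}e_2$, with $E_{12}$ the matrix unit, contains the index $2$ and is nonzero. Your earlier observation had the correct count: $s$ consecutive factors $e_1ge_1$ involve $s+1$ copies of $e_1$. The rebracketing paragraph lost that extra idempotent.

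The repair is short. A word vanishes as soon as index $1$ occurs at least $s+1$ times or index $2$ occurs at least $t+1$ times among $i_0,\dots,i_n$. Consecutive chosen positions are automatically separated by at least one factor $g$, so each bracket does lie in $J$. If neither count is reached, then $n+1\leq s+t$. Hence $n=s+t$ is exactly what the pigeonhole needs; it is not ``more than enough.''

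The example above shows that, for a ring with two orthogonal idempotents and an ideal with nilpotent corners, the exponent $s+t$ cannot be lowered in general. With the corrected count, your $m=2$ case and the induction on $m$ go through.
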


\begin{proof}
Direct and elementary matrix calculation
\end{proof}

\begin{corollary}
\label{cor:low-dimensional-sums}
\begin{enumerate}[label=\textup{(\alph*)}]
\item If $M$ is a direct summand of $N$ and $I_E(N)^s=0$ for a fixed
      extension $E/k$, then $I_E(M)^s=0$.  The same assertion holds for
      uniform exponents.
\item Assume $\operatorname{char}k=0$ and use integral coefficients.  If
      \[
       N=\bigoplus_{i=1}^m h(T_i)(a_i),
       \quad \dim T_i\leq2,
      \]
      where every $T_i$ is smooth, projective, and integral, then $5m$ is a
      uniform Rost exponent for $N$.
\item Over an arbitrary field, still with integral coefficients, if the
      $T_i$ are smooth projective geometrically rational varieties of
      dimension at most two, then $4m$ is a uniform exponent.
\end{enumerate}
\end{corollary}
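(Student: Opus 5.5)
Part (a) is a corner-ring argument. Write $N=M\oplus M'$ and let $p\in\End_k(N)$ be the idempotent projecting onto $M$, so that $\End_k(M)=p\End_k(N)p$. Base change is a functor, so it commutes with this identification: if $f\in I_E(M)$, then $\iota f\pi\in I_E(N)$, where $\iota$ and $\pi$ are the inclusion and projection of $M$. For $f_1,\dots,f_s\in I_E(M)$ we have
\[
 \iota f_1\cdots f_s\pi=(\iota f_1\pi)(\iota f_2\pi)\cdots(\iota f_s\pi),
\]
using $\pi\iota=\id_M$. The right-hand side lies in $I_E(N)^s=0$. Composing on the left with $\pi$ and on the right with $\iota$ gives $f_1\cdots f_s=0$. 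Since $I_E(M)^s$ is additively generated by such products, $I_E(M)^s=0$. The uniform statement follows by applying this to every extension $E/k$ with the same $s$.

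For (b) and (c), \cref{lem:direct-sum-bound} reduces everything to a single summand $h(T)(a)$. It therefore suffices to show that $5$ (respectively $4$) is a uniform exponent for each $h(T_i)(a_i)$, since the sum of $m$ equal exponents is then $5m$ (respectively $4m$). Twisting gives a ring isomorphism $\End(h(T))\cong\End(h(T)(a))$ compatible with base change, so we may take $a=0$. The cases $\dim T\le1$ are standard and give small exponents. For $\dim T=0$, $T$ is the spectrum of a finite separable extension when $\operatorname{char}k=0$ (or in the geometrically rational case), and the relevant kernel is small. For curves, one has the known exponent $2$ coming from the $\CH_0$ and $\CH^1$ filtration. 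For these cases I would either cite them or subsume them under the surface argument by taking products with $\mathbb P^1$. I would avoid the product trick, though, since it changes dimension; citing is cleaner.

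The real input is the surface bounds announced in the introduction: the weighted local-block refinement of Rost's filtration gives $5$ for surfaces in characteristic zero and $4$ for geometrically rational surfaces, sharpening Gille's $9$ and $6$. Since these are proved only later in the paper, the honest plan is to establish the corollary after those results. As a fallback using only what is stated so far, we would get $9m$ and $6m$ from \cite[Theorem~3.3 and Sections~4.2--4.3]{Gille2026}. The main obstacle is therefore not the formal algebra, which is (a) together with \cref{lem:direct-sum-bound}. It is the surface exponent itself: showing that Rost's three-step filtration on $\End(h(T))$ by codimension of support has successive pieces that each square to zero, with interactions controlled well enough to give $5$ rather than $3\cdot 3$. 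I would first try bounding products of kernel elements block by block in the Chow–Künneth-type decomposition $h(T)=h^{0}\oplus h^{\ge1}\oplus h^{4}$ available after a finite extension.
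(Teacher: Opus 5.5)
Your part (a) is the paper's corner-algebra argument. For (b) and (c) you use the same skeleton as the paper: reduce to single summands by \cref{lem:direct-sum-bound}, discard the twists, and import the surface exponents $5$ and $4$ from the later \cref{cor:improved-surfaces}.

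The one divergence is the case $\dim T_i<2$. There you set aside exactly the device the paper uses, and your reason for doing so is not valid. The change of dimension is harmless, because your own part (a) is what transports the bound. By the projective-bundle formula, $h(T\times\mathbb P^{2-\dim T})\simeq\bigoplus_{j=0}^{2-\dim T}h(T)(j)$. So $h(T)$ is a direct summand of the motive of a smooth projective integral surface, which is geometrically rational when $T$ is. Part (a) then gives $I_E(h(T))^5=0$, respectively $I_E(h(T))^4=0$, for every $E/k$. This costs nothing, works in arbitrary characteristic for (c), and leaves \cref{cor:improved-surfaces} as the only input.

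The numbers in your replacement are not wrong. An integral zero-dimensional $T$ has an Artin motive, so its kernel is zero by \cref{lem:artin-tate-faithful}. For curves, $2$ is a valid bound; the paper even uses injectivity of base change on $\End(h(C))=\CH^1(C\times C)$ in \cref{cor:threefold-blow-up}. Carried out, your route would give the sharper bound $\sum_i e_i$ with $e_i\le 2$ whenever $\dim T_i\le1$.

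As written, however, these cases rest on unnamed ``standard'' results. For (c) those results would also be needed over fields of positive characteristic. So the step is unsupported, and the product trick removes the need for it.

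Your sketch of how the surface bound itself might be proved is not how the paper does it. \cref{cor:improved-surfaces} applies \cref{lem:sharp-Rost-filtration} with local block lengths $(3,1,1)$, and $(2,1,1)$ in the geometrically rational case. Three kernel elements are needed on zero-cycles over residue fields, by the multilinear form of Gille's theorem, and one each on divisors and on top-dimensional cycles. No Chow--K\"unneth decomposition is involved. Square-zero behaviour on each of three graded pieces would by itself only give $2+2+2=6$; the improvement to $5$ comes from the asymmetry of the blocks.
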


\begin{proof}
Part (a) follows by identifying the endomorphism ring of a retract with a
corner algebra.  The improved surface bounds $5$ and $4$ are proved in Corollary \ref{cor:improved-surfaces} below.  If $\dim T_i<2$, then $h(T_i)$ is a
direct summand of $h(T_i\times\mathbb P^{2-\dim T_i})$ by the projective
bundle formula; in the geometrically rational case the product is again
geometrically rational.  Tate twists do not change endomorphism rings.  Now
apply Lemma \ref{lem:direct-sum-bound}.
\end{proof}

\begin{definition}\label{def:factorisation-ideal}
For motives $M,N\in\Chow(k,R)$, let
\[
 \langle N\rangle_M=
 \left\{\sum_{j=1}^r v_ju_j:\
 u_j:M\to N,\ v_j:N\to M,\ r<\infty\right\}.
\]
This is the two-sided ideal of $\End(M)$ generated by morphisms which factor
through $N$.
\end{definition}

\begin{lemma}\label{lem:element-lifting}
Let $a\in I_E(M)$ and suppose that
\[
 a^q=vu,\qquad u:M\to N,\quad v:N\to M.
\]
Here $q\geq1$.
Put $K=I_E(N)$.
\begin{enumerate}[label=\textup{(\alph*)}]
\item If every $x\in K$ satisfies $x^r=0$ for a fixed $r\geq1$, then
      $a^{q(2r+1)}=0$.
\item If $K^s=0$ for an $s\geq1$, then $a^{q(2s+1)}=0$.
\end{enumerate}
\end{lemma}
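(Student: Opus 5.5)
The idea is to transport $a$ to the endomorphism $x=u\,a\,v\in\End(N)$ and use the nilpotence available on $N$. Since $a\in I_E(M)$ and $I_E$ of the whole Hom-category is an ideal (base change is a functor), we have $(u a v)_E=u_E a_E v_E=0$. Hence $x=uav\in K=I_E(N)$.

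The key identity relates powers of $a$ to powers of $x$. We have
\[
 a^{q}\,a\,a^{q}=vu\,a\,vu = v\,x\,u .
\]
More generally, by induction on $t\geq1$,
\[
 (a^{q+1})^{t}a^{q}=v\,(uav)^{t}\,u .
\]
The inductive step is
\[
 (a^{q+1})^{t+1}a^q=(a^{q+1})^t a^q\,a\,a^q=v(uav)^t u\,a\,vu=v(uav)^{t+1}u .
\]
This uses only $a^q=vu$ and associativity. So $a^{(q+1)t+q}=v x^t u$.

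For part (a), take $t=r$. Then $x^r=0$, so $a^{(q+1)r+q}=0$. Since $q\geq1$, we have $(q+1)r+q\leq 2qr+q=q(2r+1)$, and therefore $a^{q(2r+1)}=0$.

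Part (b) follows from (a): if $K^s=0$, then every $x\in K$ satisfies $x^s=0$, so we may apply (a) with $r=s$.

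There is no real obstacle. The only points needing care are, first, checking that $uav$ lies in $K$, which is just functoriality of base change applied to morphisms between different motives. Second, the identity is a telescoping bookkeeping argument that must place the extra factor $a$ between consecutive copies of $a^q=vu$. The exponent $q(2r+1)$ stated in the lemma is looser than the exponent $(q+1)r+q$ this argument gives.
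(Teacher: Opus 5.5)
Your proof is correct, and it uses the same sandwich idea (the ``square trick'') as the paper, applied to a different element of $K$. The paper puts $b=uv$ and notes that $b^2=u\,a^q\,v\in K$. This gives $a^{q(2t+1)}=(vu)^{2t+1}=v\,b^{2t}\,u=v\,(ua^qv)^t\,u$, which is your identity with the middle factor $a$ replaced by $a^q$. By inserting a single $a$, each factor from $K$ uses up $q+1$ powers of $a$ instead of $2q$. You therefore get $a^{(q+1)r+q}=0$. This matches the paper's exponent when $q=1$ and is smaller by $r(q-1)$ when $q\geq2$.

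The same refinement carries over directly to \cref{thm:ideal-lifting}. Interleave $s+1$ factors from $J^q$ with $s$ single factors from $J$; each bracket $u_iBv_{i+1}$ with $B\in J$ already lies in $K$, so $J^{(q+1)s+q}=0$. It would also improve the element bound $11q$ in \cref{cor:surface-bounds}(a) to $6q+5$.
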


\begin{proof}
Set $b=uv\in\End(N)$.  Since $a_E=0$,
\[
 (b_E)^2=u_Ev_Eu_Ev_E=u_E(a_E^q)v_E=0,
\]
and hence $b^2\in K$.  Thus $b^{2r}=0$ in case (a), and $b^{2s}=0$ in
case (b).  Finally,
\[
 a^{q(2t+1)}=(vu)^{2t+1}=v(uv)^{2t}u=vb^{2t}u,
\]
with $t=r$ or $t=s$.
\end{proof}

The factorisation argument in Lemma \ref{lem:element-lifting} is the square trick
used by Diaz \cite[Claim~0.6]{Diaz2022}.  

\begin{theorem}\label{thm:ideal-lifting}
Fix $E/k$ and put $J=I_E(M)$ and $K=I_E(N)$.  If
\[
 K^s=0,\quad J^q\subseteq\langle N\rangle_M,
\]
for integers $q,s\geq1$, then
\[
 J^{q(2s+1)}=0.
\]
\end{theorem}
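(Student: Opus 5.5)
Rather than working with single elements, I would extend the square trick of Lemma~\ref{lem:element-lifting} to products of arbitrary elements. Every element of $J^{q(2s+1)}$ is a sum of products $x_1x_2\cdots x_{2s+1}$ with each $x_i\in J^q$. So it is enough to show that every product of $2s+1$ elements of $J^q$ vanishes. By hypothesis each $x_i$ lies in $\langle N\rangle_M$, so $x_i=\sum_j v_{ij}u_{ij}$. Expanding the product multilinearly, it suffices to treat terms of the form
\[
 v_1u_1\,v_2u_2\cdots v_{2s+1}u_{2s+1}.
\]
The term must be grouped so that each group is a single element of $J$, not just of $\langle N\rangle_M$. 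A summand $v_{ij}u_{ij}$ of $x_i$ need not itself lie in $J$, so a naive expansion loses the information $x_E=0$. This bookkeeping is the main obstacle.

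To handle it I would first pass to a single factorisation. Let $N^{\oplus r}$ be a finite direct sum of copies of $N$. Put $u_i=(u_{ij})_j\colon M\to N^{\oplus r}$ and $v_i=(v_{ij})_j\colon N^{\oplus r}\to M$, so that $x_i=v_iu_i$ with a single pair of morphisms. We can use the same $r$ for all $i$ by padding with zeros. By Lemma~\ref{lem:direct-sum-bound}, $I_E(N^{\oplus r})$ is nilpotent, but only of exponent $rs$, which is useless. So I would not use that lemma. Instead I would keep track of the entries directly.

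The product rewrites as
\[
 x_1\cdots x_{2s+1}=v_1\,(u_1v_2)(u_2v_3)\cdots(u_{2s}v_{2s+1})\,u_{2s+1}.
\]
Consider the $2s$ inner factors $b_i=u_iv_{i+1}$. Each is an $r\times r$ matrix with entries $u_{ij}v_{i+1,k}\in\End(N)$, so each entry is an element of $\End(N)$. The analogue of $(b_E)^2=0$ is obtained by pairing consecutive factors:
\[
 b_ib_{i+1}=u_i\,(v_{i+1}u_{i+1})\,v_{i+2}=u_i\,x_{i+1}\,v_{i+2}.
\]
This vanishes after base change to $E$ because $(x_{i+1})_E=0$. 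Moreover each entry $u_{ij}\,x_{i+1}\,v_{i+2,k}$ lies in $K$, since it is an endomorphism of $N$ that becomes zero over $E$.

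Grouping the $2s$ inner factors into $s$ consecutive pairs writes the middle part as a product of $s$ matrices $c_1,\dots,c_s$, each with all entries in $K$. Then every entry of $c_1\cdots c_s$ is a sum of products of $s$ elements of $K$, which vanishes because $K^s=0$. Hence the product is $v_1\cdot0\cdot u_{2s+1}=0$, which gives $J^{q(2s+1)}=0$.

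The odd exponent $2s+1$ is exactly what leaves the outer $v_1$ and $u_{2s+1}$ free. The one point to check carefully is the entrywise claim: a matrix over $\End(N)$ whose entries all lie in $K$, multiplied $s$ times, gives entries in $K^s$. This is immediate from the matrix product formula.
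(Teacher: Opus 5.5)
Your proof is correct and is essentially the paper's argument: you keep the factors $x_2,x_4,\ldots,x_{2s}$ intact and split the other $s+1$ factors through $N$. Your matrix bookkeeping over $N^{\oplus r}$ just packages the paper's multilinear summands $v_0(u_0A_1v_1)\cdots(u_{s-1}A_{2s-1}v_s)u_s$, and each bracket lies in $K$ because the intact factor lies in $J^q\subseteq J$.
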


\begin{proof}
Take $2s+1$ elements $A_0,\ldots,A_{2s}$ of $J^q$.  Expand only the even
factors $A_{2i}$ as finite sums of pure factorisations $v_i u_i$ through
$N$.  Each multilinear summand has the form
\[
 v_0(u_0A_1v_1)(u_1A_3v_2)\cdots
 (u_{s-1}A_{2s-1}v_s)u_s.
\]
The $s$ bracketed endomorphisms of $N$ belong to $K$, because every odd
factor becomes zero over $E$.  Their product vanishes, hence
$(J^q)^{2s+1}=0$.
\end{proof}

\begin{corollary}\label{cor:surface-bounds}
Assume $\operatorname{char}k=0$, use integral coefficients, fix $E/k$, put
$J=I_E(M)$, and let $q\geq1$.
\begin{enumerate}[label=\textup{(\alph*)}]
\item If $a\in J$ and $a^q$ factors through $h(S)(t)$ for one smooth
      projective integral $S$ of dimension at most two, then
      \[
       a^{11q}=0.
      \]
      If $E/k$ is finite Galois, one has the refined bound
      \[
       a^{q(2\min\{\operatorname{cd}(k),2\dim S\}+3)}=0,
      \]
      where $\operatorname{cd}(k)=\sup_\ell\operatorname{cd}_\ell(k)$.
\item If
      \[
       J^q\subseteq\left\langle
       \bigoplus_{i=1}^m h(S_i)(t_i)\right\rangle_M,
       \quad \dim S_i\leq2,
      \]
      then $J^{q(10m+1)}=0$.
\item If all the $S_i$ in (b) are geometrically rational surfaces, then
      $J^{q(8m+1)}=0$.
\end{enumerate}
In particular, a fixed single surface factorisation gives $J^{11q}=0$, and
a fixed geometrically rational surface gives $J^{9q}=0$.
\end{corollary}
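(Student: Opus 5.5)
The plan is to feed known surface bounds into \cref{lem:element-lifting} and \cref{thm:ideal-lifting}, together with \cref{lem:direct-sum-bound}. The inputs are Gille's uniform ideal bounds from \cite[Sections~4.2--4.3]{Gille2026}: $I_E(h(S))^9=0$ for every smooth projective integral surface in characteristic zero, and $I_E(h(S))^6=0$ for geometrically rational surfaces. These are taken with the improved exponents $5$ and $4$ where the corollary needs them; note that $10m+1=2\cdot 5m+1$ and $8m+1=2\cdot 4m+1$. Throughout we reduce to honest surfaces: if $\dim S<2$, replace $h(S)(t)$ by the summand-containing motive $h(S\times\mathbb P^{2-\dim S})(t)$, exactly as in the proof of \cref{cor:low-dimensional-sums}. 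A factorisation through $h(S)(t)$ also factors through the bigger motive via the split inclusion and projection. Tate twists do not change endomorphism rings, so $I_E(h(S)(t))=I_E(h(S))$.

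For (b), \cref{cor:low-dimensional-sums}(b) gives $I_E(N)^{5m}=0$ for $N=\bigoplus_i h(S_i)(t_i)$. We then apply \cref{thm:ideal-lifting} with $s=5m$, which yields $J^{q(10m+1)}=0$. Part (c) is identical, using \cref{cor:low-dimensional-sums}(c) with $s=4m$; geometric rationality is preserved by the product with a projective space. The final sentence is the case $m=1$.

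For the first claim of (a), we apply \cref{lem:element-lifting}(a) rather than (b), since only an elementwise bound on $K=I_E(h(S))$ is needed. The target $11q$ means $2r+1=11$, i.e.\ $r=5$. So we need every $x\in K$ to satisfy $x^5=0$. This follows from Rosenschon--Sawant's elementwise estimate for surfaces in characteristic zero, or simply from the ideal bound $K^5=0$.

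For the Galois refinement, we need the elementwise bound $x^{r}=0$ with $r=\min\{\operatorname{cd}(k),2\dim S\}+1$ for $x\in I_E(h(S))$ with $E/k$ finite Galois. Then \cref{lem:element-lifting}(a) gives exponent $q(2r+1)=q(2\min\{\cdot\}+3)$.

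The main obstacle is this refined elementwise bound. I would obtain it from the Rosenschon--Sawant Hochschild--Serre argument. A correspondence killed by a Galois extension acts as zero on the $E$-level, so each power pushes it one step deeper in a filtration on $\CH^*(S\times S)$ induced by Galois (or étale) cohomology of $k$. This filtration has length bounded both by $\operatorname{cd}(k)$ and by the cohomological range $2\dim S$ of the relevant groups, so $\min\{\cdot\}+1$ steps kill $x$. If the statement is only available with $\min\{\operatorname{cd}_\ell\}$ for each $\ell$, I would combine primes using the uniformity in $\ell$, with the supremum definition of $\operatorname{cd}(k)$ ensuring a single bound.
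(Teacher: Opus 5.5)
Your proposal is correct and follows the paper's proof: (a) is \cref{lem:element-lifting}(a) with the elementwise Rosenschon--Sawant exponents $r=5$ and $r=\min\{\operatorname{cd}(k),2\dim S\}+1$, while (b) and (c) are \cref{thm:ideal-lifting} with $s=5m$ and $s=4m$ taken from \cref{cor:low-dimensional-sums}. The only care needed is in the Galois refinement: apply Rosenschon--Sawant to $S$ itself rather than to $S\times\mathbb P^{2-\dim S}$, since for $\dim S<2$ that replacement would turn $2\dim S$ into $4$ and lose the stated bound.
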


\begin{proof}
The argument of Rosenschon--Sawant gives nilpotence exponent at most $5$ for
an individual correspondence on a smooth projective scheme of dimension at
most two which vanishes after an arbitrary base change.  For a finite Galois
extension, the same argument gives the refined exponent
$\min\{\operatorname{cd}(k),2\dim S\}+1$
\cite[pp.~427--429]{RosenschonSawant}.
Now apply Lemma \ref{lem:element-lifting}.  Parts (b) and (c) follow from Corollary
\ref{cor:low-dimensional-sums} and Theorem \ref{thm:ideal-lifting}.
\end{proof}

\section{Effective generic descent}

We first record the known estimate in the form needed below.

\begin{theorem}\label{thm:generic-kernel}
Let $X$ be a smooth projective integral variety of dimension $d$ over $k$.
For every coefficient ring $R$,
\[
 I_{k(X)}(h(X))^{d+1}=0.
\]
The same estimate holds for every direct summand of every Tate twist of
$h(X)$.
\end{theorem}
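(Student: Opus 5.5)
I would prove \cref{thm:generic-kernel} by the standard filtration argument of Vishik--Zainoulline, organised around the coniveau of the support of the generic fibre. Write $d=\dim X$ and $F=k(X)$. Identify $\End_k(h(X))=\CH^d(X\times X;R)$. An element $\alpha\in I_F(h(X))$ is a cycle whose pullback to $X_F=X\times_k\operatorname{Spec}F$ vanishes. The key observation is this. Restricting $\alpha$ to the generic fibre of the first projection $X\times X\to X$ is the same as base-changing $\alpha$ to $F$ and then restricting to the $F$-point given by the generic point. Hence the vanishing of $\alpha_F$ implies that $\alpha$ restricts to zero on $\operatorname{Spec}F\times X$. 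By the localisation sequence and a limit argument, $\alpha$ is then represented by a cycle supported on $Z\times X$ for some proper closed $Z\subsetneq X$. I would encode this as a filtration
\[
 F^i=\{\alpha\in\End(h(X)) : \alpha \text{ is supported on } Z\times X \text{ with } \operatorname{codim} Z\ge i\},
\]
so that $I_F(h(X))\subseteq F^1$.

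The main step, and the expected obstacle, is to show that $F^i$ is stable under right composition with the base-change kernel in a way that raises the coniveau. Concretely, for $\beta\in I_F$ and $\alpha\in F^i$, I want $\alpha\circ\beta$ or $\beta\circ\alpha$ (with the convention $\alpha\beta=\alpha\circ\beta$) to lie in $F^{i+1}$. I would try to generalise the first observation to the points of $Z$. Take a generic point $z$ of a component of $Z$, of codimension $i$, with residue field $k(z)$. Restricting $\beta$ over $z$ means base-changing to $k(z)$, and $k(z)$ is not $F$, so I cannot use $\beta_F=0$ directly. The standard remedy is to pass to the fibre product $\operatorname{Spec}k(z)\times_k\operatorname{Spec}F$ or to the generic point of $X_{k(z)}$. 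Here $k(z)(X)$ contains $F$, so $\beta$ vanishes over $k(z)(X)$. Then the specialisation map $\CH(X_{k(z)(X)})\leftarrow\CH(X_{k(z)})$, which is Rost's specialisation and is a one-sided inverse of base change for the rational point, together with the fact that $X$ has a $k(z)$-rational point only after moving, lets me deduce that the composite has restriction zero over $z$. I expect this to be precisely the content of the Vishik--Zainoulline lemma. I would simply cite it: for $\alpha\in I_{k(X)}(h(X))$ composition increases the support codimension by one, and it works for arbitrary $R$ because only localisation and specialisation are used.

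Iterating the main step, a product of $d+1$ elements of $I_F$ lies in $F^{d+1}=0$, since no closed subset of codimension $d+1$ exists. This gives $I_{k(X)}(h(X))^{d+1}=0$. For the final sentence, note that a Tate twist does not change the endomorphism ring, so $I_E(h(X)(a))=I_E(h(X))$. For a direct summand $M$ of $h(X)(a)$, apply \cref{cor:low-dimensional-sums}(a): $\End(M)$ is the corner $p\End(h(X)(a))p$, and $I_E(M)=pI_E(h(X)(a))p$, so the exponent $d+1$ passes to $M$.
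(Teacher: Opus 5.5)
Your proposal is correct and follows the paper's proof: the paper also cites the Vishik--Zainoulline splitting lemma (in the ideal form made explicit by Gille) for $I_{k(X)}(h(X))^{d+1}=0$. It then passes to direct summands of Tate twists by the corner identity $I_E(M)=p\,I_E(h(X)(a))\,p$, exactly as you do.

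The only slip is in your heuristic sketch, which you do not rely on. Specialisation goes from $k(z)(X)$ down to $k(z)$, that is $\CH\bigl((X\times X)_{k(z)(X)}\bigr)\to\CH\bigl((X\times X)_{k(z)}\bigr)$, as a left inverse of base change. No moving is needed, because $z$ itself is a smooth $k(z)$-point of $X_{k(z)}$; hence $\beta_{k(z)}=0$ for every point $z\in X$, which is exactly what makes the filtration step work.
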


\begin{proof}
The first assertion is the ideal form of the motivic splitting lemma of
Vishik--Zainoulline, as made explicit in
\cite[Theorem~4.1]{Gille2026}; see also
\cite[Lemma~3.2]{VishikZainoulline}.  If $M=(h(X)(a),e)$ is a direct
summand, then $I_{k(X)}(M)$ is the corresponding corner of
$I_{k(X)}(h(X)(a))$.  A product of $d+1$ elements of this corner belongs to
the $(d+1)$-st power of the latter ideal, and Tate twists do not change
endomorphism rings.
\end{proof}

Recall that an \emph{Artin--Tate motive} is a finite direct sum
\[
 A=\bigoplus_{i=1}^r A_i(a_i),
\]
where, after grouping equal twists, the $a_i$ are pairwise distinct and each
$A_i$ is a direct summand of the motive of a smooth projective
zero-dimensional scheme.

\begin{lemma}\label{lem:artin-tate-faithful}
For every field extension $E/k$ and every coefficient ring $R$, base change
\[
 \End_k(A)\longrightarrow\End_E(A_E)
\]
is injective for an Artin--Tate motive $A$.  Thus $1$ is a uniform Rost
exponent for $A$.
\end{lemma}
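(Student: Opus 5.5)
The plan is to reduce to Hom groups between Tate twists of motives of zero-dimensional schemes, and then check injectivity on Chow groups of zero-dimensional schemes over the base field.

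First reduce to the summands. Write $A=\bigoplus_i A_i(a_i)$ with each $A_i=(h(Z_i),e_i)$ a direct summand of $h(Z_i)$, where $Z_i$ is smooth projective of dimension zero. Then
\[
\End_k(A)=\bigoplus_{i,j}\Hom_k\bigl(A_j(a_j),A_i(a_i)\bigr),
\]
and base change respects this matrix decomposition. So it suffices to show that each map $\Hom_k(A_j(a_j),A_i(a_i))\to\Hom_E(\cdots)$ is injective. Each such group is the corner $e_i\circ\Hom_k(h(Z_j)(a_j),h(Z_i)(a_i))\circ e_j$ of the corresponding group for the full motives $h(Z_j)(a_j)$ and $h(Z_i)(a_i)$, and base change commutes with composition by $e_i$ and $e_j$. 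Hence it is enough to prove injectivity of
\[
\Hom_k\bigl(h(Z)(p),h(Z')(q)\bigr)\longrightarrow\Hom_E\bigl(h(Z_E)(p),h(Z'_E)(q)\bigr)
\]
for zero-dimensional $Z,Z'$. By additivity we may also assume $Z$ and $Z'$ are connected, i.e. spectra of finite separable extensions.

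Next compute these Hom groups. By \eqref{eq:covariant-convention} the group equals $\CH^{q-p}(Z\times_k Z';R)$, and $Z\times_kZ'$ is zero-dimensional. If $p\neq q$ the group vanishes, so there is nothing to prove. (This is where grouping equal twists makes the off-diagonal blocks harmless, although the vanishing holds regardless.) If $p=q$, the group is $\CH^0(Z\times_kZ';R)=R^{\pi_0(Z\times_kZ')}$, free on the connected components $W$ of the étale $k$-scheme $Z\times_kZ'$.

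Finally check injectivity on $\CH^0$. Base change sends the fundamental class $[W]$ to $[W_E]$. Since $W$ is étale, $W_E$ is nonempty and reduced, and it is a disjoint union of components. For distinct $W$ these unions are disjoint, and each $[W_E]$ is a sum of basis elements with coefficient $1$. The induced map $R^{\pi_0(Z\times Z')}\to R^{\pi_0((Z\times Z')_E)}$ therefore sends distinct basis vectors to sums over disjoint nonempty sets of basis vectors, so it is injective. This holds for any $R$, since it is a split injection of free modules: compose with projection onto one chosen component of each $W_E$.

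The main obstacle is the bookkeeping. We need base change to commute with the corner and idempotent description, and we need $Z\times_kZ'$ to be reduced. Reducedness requires the zero-dimensional schemes to be étale. That is automatic here, since "smooth" is part of the definition of an Artin--Tate motive. The uniform exponent $1$ then follows immediately, because $I_E(A)=0$.
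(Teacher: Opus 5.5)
Your proposal is correct and follows essentially the same route as the paper's proof. Both arguments show that Homs between Tate-twisted motives of smooth zero-dimensional schemes vanish for unequal twists, and that for equal twists they are free on the components of the étale product. Base change sends these basis vectors to sums over disjoint nonempty sets, which gives a split injection, and passing to corners preserves injectivity. The only cosmetic difference is that you check every block of the matrix decomposition of $\End_k(A)$ directly, whereas the paper first writes $\End_k(A)$ as a product of its equal-twist blocks.
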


\begin{proof}
If $Z$ is smooth projective and zero-dimensional, then
$\CH^0(Z\times Z;R)$ is free over $R$ on the connected components of
$Z\times Z$.  Under base change each such basis vector becomes the sum of a
nonempty set of basis vectors, and the sets arising from distinct components
are disjoint.  The resulting homomorphism is split injective as an
$R$-module map.  Passing to a direct summand amounts to taking a corner and
preserves injectivity.  Finally,
\[
 \Hom(A_i(a_i),A_j(a_j))=0\qquad(a_i\ne a_j),
\]
because a zero-dimensional scheme has no Chow groups in nonzero
codimension.  Hence the endomorphism ring of $A$ is the product of the
endomorphism rings of its equal-twist Artin blocks, and the assertion
follows.
\end{proof}

The following quantitative descent statement is the main result of this
section.

\begin{theorem}\label{thm:generic-descent}
Let $X$ be a smooth projective integral variety of dimension $d$, and let
$M$ be a direct summand of $h(X)(a)$.  Suppose that $M_{k(X)}$ has uniform
Rost exponent $s$ as a motive over $k(X)$.  Then
$s(d+1)$
is a uniform Rost exponent for $M$ over $k$.
\end{theorem}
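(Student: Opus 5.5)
The plan is to use a common-overfield argument. Fix an arbitrary field extension $E/k$ and put $J=I_E(M)$. Choose a field $L$ containing both $E$ and $k(X)$ over $k$. For instance, take $L$ to be the residue field at a generic point of $\operatorname{Spec}(E\otimes_k k(X))$, or any compositum; such a field exists because $E\otimes_k k(X)\neq0$. Functoriality of base change gives commutative triangles
\[
 \End_k(M)\to\End_E(M_E)\to\End_L(M_L),\qquad
 \End_k(M)\to\End_{k(X)}(M_{k(X)})\to\End_L(M_L).
\]
Hence every $a\in J$ maps to an element of $I_L(M)$. The aim is to show $J^{s(d+1)}=0$ by passing through $k(X)$.

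The first step takes a product $a_1\cdots a_s$ with all $a_i\in J$ and looks at its image over $k(X)$. Write $\bar a_i=(a_i)_{k(X)}\in\End_{k(X)}(M_{k(X)})$. Each $\bar a_i$ becomes zero after the further base change from $k(X)$ to $L$, because its image over $L$ factors through $(a_i)_E=0$. Thus $\bar a_i\in I_{L/k(X)}(M_{k(X)})$. By hypothesis $M_{k(X)}$ has uniform Rost exponent $s$ over $k(X)$, and $L/k(X)$ is an extension of $k(X)$. Therefore $\bar a_1\cdots\bar a_s=0$, which means $J^s\subseteq I_{k(X)}(M)$.

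The second step applies \cref{thm:generic-kernel}. Since $M$ is a direct summand of $h(X)(a)$, that theorem gives $I_{k(X)}(M)^{d+1}=0$. Consequently
\[
 J^{s(d+1)}=(J^s)^{d+1}\subseteq I_{k(X)}(M)^{d+1}=0.
\]
The bound $s(d+1)$ does not depend on $E$, so it is a uniform exponent.

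I expect the only real subtlety to be the existence of the common overfield $L$ and the compatibility of base change along the two routes. The composite $k\to E\to L$ and the composite $k\to k(X)\to L$ must give the same map $\End_k(M)\to\End_L(M_L)$. This holds because both are $k$-embeddings into the same $L$, and base change of correspondences depends only on the resulting extension $L/k$. One must also check that "uniform exponent over $k(X)$" is applied to the extension $L/k(X)$ defined by the chosen embedding; this is immediate. Everything else is formal ideal arithmetic.
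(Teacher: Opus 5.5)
Your proposal is correct and follows the paper's own proof. Both build a common overfield of $E$ and $k(X)$ from a prime of $E\otimes_k k(X)$, use the uniform exponent over $k(X)$ to get $J^s\subseteq I_{k(X)}(M)$, and conclude $J^{s(d+1)}=(J^s)^{d+1}=0$ from \cref{thm:generic-kernel}.
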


\begin{proof}
Fix an extension $E/k$ and put $J=I_E(M)$ and
$K=I_{k(X)}(M)$.  Choose a prime ideal of the nonzero ring
$E\otimes_k k(X)$ and let $\Omega$ be the fraction field of the resulting
domain.  Then $\Omega$ is a common overfield of $E$ and $k(X)$.

For $\alpha_1,\ldots,\alpha_s\in J$, the restrictions of the $\alpha_i$ to
$M_{k(X)}$ vanish after extension to $\Omega$.  The assumed uniform
exponent over $k(X)$ therefore gives
\[
 (\alpha_1\cdots\alpha_s)_{k(X)}=0.
\]
Consequently $J^s\subseteq K$.  By \cref{thm:generic-kernel},
$K^{d+1}=0$, and hence
\[
 J^{s(d+1)}=(J^s)^{d+1}\subseteq K^{d+1}=0.
\]
The bound is independent of $E$.
\end{proof}

The ''qualitative'' common overfield argument is
\cite[Lemma~3]{DeClercqMarthZainoulline}.  The content of
\cref{thm:generic-descent} is the general exponent estimate.

\begin{corollary}\label{cor:generic-artin-tate}
In the situation of \cref{thm:generic-descent}, if $M_{k(X)}$ is
Artin--Tate (in particular, if it is split), then $d+1$ is a uniform Rost
exponent for $M$.  In particular, a generically split smooth projective
integral $d$-fold has uniform Rost exponent at most $d+1$.
\end{corollary}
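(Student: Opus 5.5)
The plan is to apply \cref{thm:generic-descent} with $s=1$, using \cref{lem:artin-tate-faithful} to supply the input hypothesis.

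\textbf{First assertion.} Suppose $M_{k(X)}$ is Artin--Tate over the base field $k(X)$. By \cref{lem:artin-tate-faithful}, applied over $k(X)$ rather than $k$, base change
\[
 \End_{k(X)}(M_{k(X)})\longrightarrow\End_F(M_F)
\]
is injective for every extension $F/k(X)$. Hence $I_F(M_{k(X)})=0$, so $1$ is a uniform Rost exponent for $M_{k(X)}$. \Cref{thm:generic-descent} with $s=1$ then gives the uniform exponent $1\cdot(d+1)=d+1$ for $M$ over $k$. Equivalently, the proof of that theorem shows $I_E(M)\subseteq I_{k(X)}(M)$ for every $E/k$, and \cref{thm:generic-kernel} kills the $(d+1)$-st power.

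\textbf{The split case.} A split motive is a finite sum of Tate motives $\mathbb{1}(a_i)$. After grouping equal twists, each block is $\mathbb{1}^{\oplus n}(a)$, which is the motive of a zero-dimensional scheme consisting of $n$ copies of the point, twisted by $a$. This is of the required Artin--Tate form, so the split case is included.

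\textbf{Second assertion.} Call $X$ generically split if $h(X)_{k(X)}$ is split. Take $M=h(X)$, which is a summand of $h(X)(0)$, and apply the first part. A uniform Rost exponent for $h(X)$ is then $d+1$, hence "at most $d+1$".

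\textbf{Main obstacle.} The only delicate point is that \cref{lem:artin-tate-faithful} must be used over the field $k(X)$, not over $k$. The lemma is stated for an arbitrary base field, so this causes no difficulty. Note also that $M_{k(X)}$ is taken to be Artin--Tate in the category of Chow motives over $k(X)$ with the same coefficient ring $R$.
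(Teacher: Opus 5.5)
Your proposal is correct and follows the paper's own proof: Lemma~\ref{lem:artin-tate-faithful}, applied over the base field $k(X)$, gives $1$ as a uniform exponent for $M_{k(X)}$, and Theorem~\ref{thm:generic-descent} with $s=1$ then yields $d+1$. Your extra checks are sound and only make explicit what the paper leaves implicit: a split motive is Artin--Tate after grouping equal twists, and $h(X)$ is a summand of $h(X)(0)$.
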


\begin{proof}
Apply Lemma \ref{lem:artin-tate-faithful} and Theorem \ref{thm:generic-descent} with $s=1$.
\end{proof}

We now apply effective descent to a recent class for which strong
Rost nilpotence is known.  Let $A$ be a central simple algebra of degree
$n+1$, let $\varphi\in A$ generate a maximal commutative étale
$k$-subalgebra $L=k[\varphi]$, and consider the twisted partial flag variety
\[
 \mathcal E_A=\{I_1\subset I_n\subset A:\operatorname{rdim}I_i=i\}.
\]
The twisted Milnor hyperplane section $Y_A\subset\mathcal E_A$ is defined by
$\varphi I_1\subset I_n$.

\begin{corollary}
\label{cor:twisted-milnor}
Let $n\geq2$.  Assume that $\varphi\in A^\times$ has $n+1$ distinct
geometric eigenvalues and that $L=k[\varphi]$ is a maximal commutative
subalgebra of $A$.  Assume moreover that, for some $0\leq m\leq n$, there
is an identification
\[
 L\simeq L'\times k^{\,n-m},
 \quad [L':k]=m+1,
\]
under which
\[
 \varphi=(\varphi',a_{m+1},\ldots,a_n),\quad
 L'=k[\varphi'],\quad a_j\in k^\times,
\]
and $L'/k$ is Galois.  Assume also that $Y_A$ is smooth,
projective, and integral.  Then $2n-1$ is a uniform Rost
exponent for $h(Y_A)$ with integral coefficients.
More generally, every direct summand of
$\bigoplus_{j=1}^r h(Y_{A_j})(a_j)$ has uniform exponent at most
$\sum_{j=1}^r(2n_j-1)$.
\end{corollary}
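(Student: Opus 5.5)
We would prove \cref{cor:twisted-milnor} by combining \cref{cor:generic-artin-tate} with \cref{lem:direct-sum-bound}. The first step is to compute $\dim Y_A$. The partial flag variety of pairs $I_1\subset I_n$ in $A$ becomes, over $\bar k$, the incidence variety of points and hyperplanes in $\mathbb P^n$. This variety has dimension $2n-1$, and $Y_A$ is a hypersurface section of it, so $d=\dim Y_A=2n-2$. Hence $d+1=2n-1$, which is the exponent asserted. The main claim therefore reduces to showing that $h(Y_A)$ becomes Artin--Tate, or split, over the function field $F=k(Y_A)$. Once this holds, \cref{cor:generic-artin-tate} applied to the summand $M=h(Y_A)$ of $h(Y_A)(0)$ gives the uniform exponent $d+1=2n-1$.

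The second step is the generic splitting. Over $F$, the variety $Y_A$ acquires a rational point. So there is a right ideal $I_1$ of reduced dimension $1$ in $A_F$, and $A_F$ is split, $A_F\simeq\End(V)$ with $\dim V=n+1$. Then $(Y_A)_F$ is the Milnor hyperplane section $\{\ell\subset H:\varphi\ell\subset H\}$ of the incidence variety in $\mathbb P(V)\times\mathbb P(V^\vee)$, where $\varphi$ is a semisimple operator with $n+1$ distinct eigenvalues. Over $F$, these eigenvalues are acted on through the étale algebra $L_F=L'_F\times F^{\,n-m}$. This is exactly the situation handled in \cite{DeClercqMarthZainoulline}. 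There the motive of the split-algebra variety is decomposed, via a cellular or relative cellular structure (fibration over $\mathbb P(V)$ with fibres hyperplanes or projective bundles, stratified according to the eigenlines of $\varphi$), into Tate twists of motives of étale algebras built from $L_F$. These pieces are Artin motives. The hypothesis that $L'/k$ is Galois and the splitting of $L$ are what make the decomposition hold with Artin summands, rather than summands involving nontrivial anisotropic varieties. I would cite their motivic decomposition directly, or reprove it by the blow-up/projective bundle description of $Y$ over $\mathbb P(V)$ away from the eigenlines. I expect this step to be the main obstacle: one must verify that every summand is an Artin motive $h(\operatorname{Spec}E')(i)$ for an étale $F$-algebra $E'$, with no contribution from a nonsplit form. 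The distinctness of eigenvalues ensures that the degeneracy loci are finite étale subschemes, namely the eigenline points $\operatorname{Spec}L_F$.

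The final step handles the general assertion. Each $h(Y_{A_j})(a_j)$ has uniform exponent $2n_j-1$ by the first part, since twists do not change endomorphism rings. \cref{lem:direct-sum-bound}, applied for each fixed $E$, gives the exponent $\sum_j(2n_j-1)$ for the direct sum. \cref{cor:low-dimensional-sums}(a) then passes this exponent to any direct summand.
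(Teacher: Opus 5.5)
Your proposal is correct and follows essentially the same route as the paper. You compute $\dim Y_A=2n-2$ and split $A$ over $k(Y_A)$, so that $(Y_A)_{k(Y_A)}$ becomes the untwisted Milnor section, whose motive is Artin--Tate by the decomposition in De~Clercq--Marth--Zainoulline. You then apply \cref{cor:generic-artin-tate} with $d+1=2n-1$, and \cref{lem:direct-sum-bound} plus the corner argument for the general assertion.
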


\begin{proof}
The variety $\mathcal E_A$ has dimension $2n-1$, so its smooth hyperplane
section $Y_A$ has dimension $2n-2$.  The proof of
\cite[Theorem~4]{DeClercqMarthZainoulline} shows that $A$ splits over
$k(Y_A)$ and that $(Y_A)_{k(Y_A)}$ becomes the untwisted section $Y$.
Equations~(1)--(2) and Lemma~1 of that paper express $h(Y)$ as an
Artin--Tate motive, recursively in the mixed case $m<n$; the initial
decomposition is due to \cite[Theorem~1.1]{Marth}.  More explicitly, in the
pure Galois case $m=n$, after putting $F=k(Y_A)$ one has
\[
 h(Y_A)_F\simeq h(Y)_F\simeq
 \bigoplus_{i=0}^{n-2}h(\mathbb P^n_F)(i)
 \oplus h\bigl(\operatorname{Spec}(L\otimes_kF)\bigr)(n-1).
\]
Thus Corollary \ref{cor:generic-artin-tate} shows that $2n-1$ is a uniform exponent.  The final assertion follows from Lemma
\ref{lem:direct-sum-bound} and passage to a direct summand.
\end{proof}

\section{Support and birational retracts}

Except for assertions explicitly stated over fields of arbitrary
characteristic, we now work in characteristic zero.

\begin{lemma}\label{lem:support-factorisation}
Let $X,Y$ be smooth projective integral varieties, set $d=\dim X$, and let
$\gamma\in\CH^{\dim Y}(X\times Y;R)$.
\begin{enumerate}[label=\textup{(\alph*)}]
\item If $\gamma$ is supported on $Z\times Y$ for a proper closed subset
      $Z\subset X$, then $\gamma$ factors through
      \[
       \bigoplus_j h(T_j)(d-\dim T_j),
      \]
      where the $T_j$ are smooth projective integral varieties with
      $\dim T_j\leq\dim Z$.
\item If $\gamma$ is supported on $X\times Z$ for a closed subset
      $Z\subset Y$, then it factors through $\bigoplus_jh(T_j)$, with
      smooth projective integral $T_j$ satisfying $\dim T_j\leq\dim Z$.
\end{enumerate}
\end{lemma}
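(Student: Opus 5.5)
We prove (b) first and then deduce (a) by transposition. In both parts, resolution of singularities is available because we are in characteristic zero.

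For (b), the support hypothesis together with the localization sequence
\[
 \CH_*(X\times Z;R)\to\CH_*(X\times Y;R)\to\CH_*(X\times(Y\setminus Z);R)\to0
\]
shows that $\gamma=(\mathrm{id}\times i)_*\gamma'$ for some cycle $\gamma'$ on $X\times Z$, where $i\colon Z\hookrightarrow Y$ is the inclusion.

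Decompose $Z$ into irreducible components $Z_j$ and choose resolutions $\pi_j\colon T_j\to Z_j$ with $T_j$ smooth projective integral and $\dim T_j\le\dim Z$. The key input is the support-surjectivity lemma of Diaz \cite{Diaz2022}. It says that, after possibly enlarging the family of $T_j$ by resolutions of lower-dimensional strata (which we add inductively on dimension), the push-forward
\[
 \bigoplus_j\CH_*(X\times T_j;R)\to\CH_*(X\times Z;R)
\]
is surjective. The proof is by Noetherian induction: on the open part where $\pi_j$ is an isomorphism every cycle lifts, and the boundary has smaller dimension and is handled by the induction. We can therefore write
\[
 \gamma=\sum_j(\mathrm{id}\times\pi_j)_*\gamma_j,
 \qquad \gamma_j\in\CH_*(X\times T_j;R),
\]
where each $\pi_j$ now denotes the composite $T_j\to Y$.

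Next we check degrees. Since $\gamma$ has dimension $d=\dim X$, each $\gamma_j$ has dimension $d$ in $X\times T_j$. By \eqref{eq:covariant-convention} this is exactly an element of $\Hom(h(X),h(T_j))=\CH^{\dim T_j}(X\times T_j;R)$. The graph of $\pi_j$ gives a morphism $\Gamma_{\pi_j}\colon h(T_j)\to h(Y)$, and composition of correspondences gives $\Gamma_{\pi_j}\circ\gamma_j=(\mathrm{id}\times\pi_j)_*\gamma_j$. Hence $\gamma$ factors as
\[
 h(X)\xrightarrow{(\gamma_j)_j}\bigoplus_jh(T_j)\xrightarrow{\ \sum_j\Gamma_{\pi_j}\ }h(Y).
\]

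For (a), apply (b) to the transpose ${}^t\gamma\in\CH_d(Y\times X;R)$, which is supported on $Y\times Z$. The codimension of ${}^t\gamma$ in $Y\times X$ is $\dim Y$ rather than $d$, so first treat it as an element of $\CH_{d}(Y\times X)$. Lifting as above gives classes $\delta_j\in\CH_d(Y\times T_j)$ with $\dim T_j\le\dim Z$, and ${}^t\gamma=\sum_j(\mathrm{id}\times\pi_j)_*\delta_j$. Transposing back,
\[
 \gamma=\sum_j{}^t\delta_j\circ{}^t\Gamma_{\pi_j},
\]
where ${}^t\Gamma_{\pi_j}$ is a correspondence $X\to T_j$ of dimension $\dim T_j$.

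We now match twists using \eqref{eq:covariant-convention}. A cycle of dimension $\dim T_j$ on $X\times T_j$ has codimension $d$, and $\Hom(h(X),h(T_j)(p))=\CH^{\dim T_j+p}$, so ${}^t\Gamma_{\pi_j}$ is a morphism $h(X)\to h(T_j)(d-\dim T_j)$. On the other side, ${}^t\delta_j$ is a cycle of dimension $d$ on $T_j\times Y$, i.e. of codimension $\dim T_j+\dim Y-d$. Since
\[
 \Hom\bigl(h(T_j)(d-\dim T_j),h(Y)\bigr)=\CH^{\dim Y-(d-\dim T_j)}(T_j\times Y;R),
\]
this is exactly the required degree. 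So $\gamma$ factors through $\bigoplus_jh(T_j)(d-\dim T_j)$.

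The main obstacle is the surjectivity of push-forward from the resolutions. A cycle on $X\times Z$ does not obviously lift to a chosen resolution of $Z$, because $\pi_j$ is not an isomorphism over the singular locus and over intersections of components. We handle this with Diaz's support-surjectivity lemma, using an induction on $\dim Z$ that adds resolutions of the lower-dimensional strata. We do not try to lift to one arbitrarily chosen resolution.
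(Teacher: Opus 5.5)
Your proof is correct and follows essentially the same route as the paper. You use localisation to write $\gamma$ as a push-forward from $Z\times Y$ (resp.\ $X\times Z$), Diaz's support-surjectivity lemma to lift to smooth projective $T_j$, and the composition formulas with $\Gamma_{\pi_j}$ and ${}^t\Gamma_{\pi_j}$ with the same twist bookkeeping. The only differences are cosmetic: you do (b) first and get (a) by transposing and reusing the lifting step, and you sketch the Noetherian-induction proof of Diaz's lemma instead of citing it.
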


\begin{proof}
By localisation, a class supported on $Z\times Y$ is the push-forward of a
class on $Z\times Y$.  Diaz's support-surjectivity lemma
\cite[Lemma~0.5]{Diaz2022} gives a finite disjoint union
$T=\coprod_jT_j$ of smooth projective integral varieties and a proper map
$\pi:T\to Z$ such that
\[
 (\pi\times\id_Y)_*:\CH_*(T\times Y;R)
 \longrightarrow\CH_*(Z\times Y;R)
\]
is surjective.  Put $g_j:T_j\to Z\hookrightarrow X$ and lift the class to
classes $\beta_j$ on $T_j\times Y$.  If $t_j=\dim T_j$, then the transpose
of the graph of $g_j$ and the lifted class have the types
\[
 {}^t\Gamma_{g_j}:h(X)\longrightarrow h(T_j)(d-t_j),
 \quad
 \beta_j:h(T_j)(d-t_j)\longrightarrow h(Y).
\]
The standard composition formula for correspondences shows that
$\gamma=\sum_j\beta_j\,{}^t\Gamma_{g_j}$; see also
\cite[Chapter~16]{Fulton}.

For support on $X\times Z$, apply the same surjectivity statement to
$(\id_X\times\pi)_*$.  A lifted class defines a map $h(X)\to h(T_j)$, and
the graph of $T_j\to Z\hookrightarrow Y$ defines $h(T_j)\to h(Y)$.  Their
composite gives the required summand.  Tensoring the integral statement with
$R$ preserves surjectivity.
\end{proof}

Let $\Chowo(k,R)$ be the idempotent completion of the quotient of effective
Chow motives by the ideal of morphisms factoring through an object of the
form $P(1)$, see \cite{KahnSujatha}.  We write $\hcirc X$ for the image of $h(X)$.

\begin{theorem}\label{thm:KS}
For smooth projective integral $X,Y$ over a field of characteristic zero,
\[
 \Hom_{\Chowo(k,R)}(\hcirc X,\hcirc Y)
 \simeq \CH_0(Y_{k(X)};R).
\]
The quotient map is restriction to the generic point of $X$, and its kernel
consists of classes supported on $D\times Y$ for some proper closed subset
$D\subset X$.
\end{theorem}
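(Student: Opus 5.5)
The plan is to compute $\Hom$ in the quotient before idempotent completion; since $\hcirc X$ and $\hcirc Y$ are images of objects of the additive quotient, the completion does not change these groups. In the effective category with the covariant convention \eqref{eq:covariant-convention},
\[
 \Hom(h(X),h(Y))=\CH^{\dim Y}(X\times Y;R)=\CH_{\dim X}(X\times Y;R).
\]
So I must show two things. First, the ideal $\mathcal I(h(X),h(Y))$ of morphisms factoring through some $P(1)$ equals the subgroup of classes supported on $D\times Y$ for a proper closed $D\subset X$. Second, that subgroup is exactly the kernel of the restriction to the generic fibre
\[
 \CH_{\dim X}(X\times Y;R)\longrightarrow\CH_0(Y_{k(X)};R),
\]
which is surjective.

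The second point is standard localisation. $\CH_0(Y_{k(X)})$ is the colimit over dense opens $U\subset X$ of $\CH_{\dim X}(U\times Y)$, and each restriction from $X\times Y$ is surjective. By the localisation sequence
\[
 \CH_{\dim X}(D\times Y)\to\CH_{\dim X}(X\times Y)\to\CH_{\dim X}((X\smallsetminus D)\times Y)\to0,
\]
the kernel of the restriction is the union over proper closed $D$ of the images of $\CH_*(D\times Y)$. Tensoring with $R$ is right exact and commutes with the colimit, so this holds with coefficients.

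For the first point I argue in both directions. Suppose $\gamma$ is supported on $D\times Y$. Then \cref{lem:support-factorisation}(a) factors it through $\bigoplus_j h(T_j)(d-\dim T_j)$ with $\dim T_j\le\dim D<d$. Every twist there is positive, so this object has the form $P(1)$ with $P$ effective, and $\gamma$ lies in the ideal. Conversely, suppose $\gamma=vu$ with $u\colon h(X)\to P(1)$ and $v\colon P(1)\to h(Y)$. By additivity I may take $P=h(T)(c)$ with $c\ge0$, $T$ smooth projective integral, and even a summand of such. Then
\[
 u\in\CH^{\dim T+c+1}(X\times T).
\]
This is a cycle of dimension $d-c-1<d$, so its image under the first projection to $X$ is a proper closed subset $D$. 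By the composition formula, $vu=p_{13*}(p_{12}^*u\cdot p_{23}^*v)$ is supported on $p_{13}(\operatorname{supp}u\times Y)\subset D\times Y$. Idempotents and summands cause no trouble, because a morphism factoring through a summand of $P(1)$ factors through $P(1)$.

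I expect the main obstacle to be this converse step. One has to track dimensions in the covariant convention carefully, and to handle a general effective $P$, which is a summand of a sum of $h(T)(c)$ with $c\ge0$. The support argument must also be carried out with $R$-coefficients, where supports are taken on integral representatives before tensoring.

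Finally, I need compatibility with composition, so that the isomorphism is natural; the statement only asks for the group isomorphism, and this follows by inspection. The description of the quotient map as restriction to the generic point of $X$ is built into the construction. Characteristic zero enters only through \cref{lem:support-factorisation}, which relies on resolution via Diaz's lemma.
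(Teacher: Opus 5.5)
Your argument is correct, but it takes a genuinely different route from the paper. The paper gives no argument and simply invokes Kahn--Sujatha \cite[Lemma~2.3.7, Theorem~2.4.2 and Corollary~2.4.3]{KahnSujatha}. You instead derive the statement from material already in the paper, in three steps:
\begin{enumerate}[label=\textup{(\arabic*)}]
\item The ideal of morphisms factoring through some $P(1)$ lies in the classes supported on some $D\times Y$. This follows from the dimension count $\dim u=d-c-1<d$ for $u\colon h(X)\to h(T)(c+1)$, together with the support of the refined product $p_{12}^*u\cdot p_{23}^*v$.
\item The reverse inclusion is Lemma~\ref{lem:support-factorisation}(a). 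Its twists $d-\dim T_j\geq1$ exhibit the intermediate object as $P(1)$ with $P$ effective.
\item Localisation and the limit formula for $\CH_0(Y_{k(X)};R)$ identify that subgroup with the kernel of the surjective restriction to the generic fibre.
\end{enumerate}
There is no circularity, since Lemma~\ref{lem:support-factorisation} is proved from Diaz's support-surjectivity lemma and the composition formula alone. Your reductions are also sound. You pass from a general effective $P$ to finite sums of $h(T)(c)$, using irreducibility of $X$ so that finite unions of the sets $D$ stay proper. The bookkeeping with $R$-coefficients via right exactness of $-\otimes R$ is handled correctly.

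Your route buys self-containment and pins down where the hypothesis enters. Characteristic zero is needed only for ``supported $\Rightarrow$ factors through $P(1)$''; the other inclusion and the localisation step are characteristic-free. The citation additionally supplies the compatibility of composition in $\Chowo(k,R)$ with the natural composition law on the groups $\CH_0(Y_{k(X)};R)$, which you only assert ``by inspection''. That compatibility is not part of the statement. The paper's later uses only need the group-level identification and the kernel description, which you do prove. For instance, in Lemma~\ref{lem:birational-ideal-CH0} the paper identifies $\gamma^\circ$ with $\gamma_*[\eta_X]$ and converts $\gamma^\circ=0$ into support on $D\times X$.
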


\begin{proof}
This is \cite[Lemma~2.3.7, Theorem~2.4.2 and
Corollary~2.4.3]{KahnSujatha}.  
\end{proof}

\subsection{Birational kernel ideals and a sharp Rost filtration}

Throughout this subsection all Chow groups and motives have integral
coefficients.

For a smooth projective integral variety $X$ and an extension $E/k$, put
\[
 J_E(X)=I_E(h(X)).
\]
When $\operatorname{char}k=0$, also put
\[
 B_E(X)=\operatorname{im}\bigl(J_E(X)\longrightarrow
          \End(\hcirc X)\bigr).
\]
The second ideal retains exactly the zero-cycle information needed in
Gille's condition (RNZC); see \cite{Gille2018}.

\begin{lemma}
\label{lem:birational-ideal-CH0}
Let $r\geq1$.  The following conditions are equivalent:
\begin{enumerate}[label=\textup{(\roman*)}]
\item $B_E(X)^r=0$;
\item for every field extension $F/k$, every product of $r$ elements of
      $J_E(X)$ acts trivially on $\CH_0(X_F)$;
\item every product of $r$ elements of $J_E(X)$ acts trivially on
      $\CH_0(X_{k(X)})$.
\end{enumerate}
\end{lemma}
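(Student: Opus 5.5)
The plan is to translate everything through \cref{thm:KS}. An endomorphism $f\in\End(h(X))=\CH^d(X\times X)$ has image in $\End(\hcirc X)\simeq\CH_0(X_{k(X)})$ given by restricting $f$ to the generic point of the first factor, that is, by $f_*[\eta]$. Here $[\eta]\in\CH_0(X_{k(X)})$ is the class of the generic point regarded as a rational point, which is the image of the diagonal. So $f$ maps to zero in $\End(\hcirc X)$ if and only if $f_*[\eta]=0$ in $\CH_0(X_{k(X)})$. For a product $f=a_1\cdots a_r$ of elements of $J_E(X)$, its image in $\End(\hcirc X)$ is the product of the images of the $a_i$, because the quotient functor is a ring map. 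Hence (i) amounts to $(a_1\cdots a_r)_*[\eta]=0$ for all such products, since $B_E(X)^r$ is spanned by the images of these products.

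For (i)$\Rightarrow$(iii), we use that the $\CH_0$ action of a correspondence factors through its birational image. If $g\in\End(h(X))$ is supported on $D\times X$ with $D\subsetneq X$, then $g_*$ kills $\CH_0(X_F)$ for every $F$. The reason is that a closed point of $X_F$ can be moved off $D_F$ by the moving lemma for zero-cycles on smooth varieties, and then the action is computed by pulling back along $x\times X$, which misses the support. Alternatively, one can invoke the fact, contained in \cite{KahnSujatha}, that $\CH_0$ of a base change is a functor on $\Chowo$. Thus the action of $a_1\cdots a_r$ on $\CH_0(X_F)$ depends only on its image in $\End(\hcirc X)$, which is zero by (i). This gives (i)$\Rightarrow$(ii) for every $F$, and (ii)$\Rightarrow$(iii) is the special case $F=k(X)$.

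For (iii)$\Rightarrow$(i), apply the product $f=a_1\cdots a_r$ to the element $[\eta]\in\CH_0(X_{k(X)})$. By (iii) we get $f_*[\eta]=0$, and $f_*[\eta]$ is exactly the image of $f$ under the Kahn--Sujatha isomorphism. So $f$ maps to zero in $\End(\hcirc X)$, and summing over products gives $B_E(X)^r=0$. One point needs care: the identification of the action on $[\eta]$ with restriction to the generic point, $f_*[\eta]=f|_{\eta\times X}$. This is the standard formula $f_*[x]=(\text{pr}_2)_*(f\cdot[x\times X])$ applied over $k(X)$ to the rational point $\eta$.

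The main obstacle is justifying, uniformly in $F$, that correspondences supported on $D\times X$ act as zero on $\CH_0(X_F)$. This requires moving zero-cycles on $X_F$ away from $D_F$ (Chow's moving lemma for $0$-cycles on a smooth projective variety), or equivalently quoting the functoriality of $F\mapsto\CH_0(X_F)$ on $\Chowo(k)$ from \cite{KahnSujatha}. I would cite the latter to avoid the moving argument.
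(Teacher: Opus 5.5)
Your proof is correct and follows the paper's route. You identify $\gamma^\circ$ with $\gamma_*[\eta_X]$ via \cref{thm:KS}, use that the kernel of $\End(h(X))\to\End(\hcirc X)$ consists of classes supported on $D\times X$ to get (i)$\Rightarrow$(ii), and evaluate on $[\eta_X]$ for (iii)$\Rightarrow$(i). The only difference is how you show that a correspondence supported on $D\times X$ kills $\CH_0(X_F)$: the paper factors it through $\bigoplus h(T)(\dim X-\dim T)$ with $\dim T<\dim X$ via \cref{lem:support-factorisation} and uses $\CH^{\dim X}(T_F)=0$, whereas your moving-lemma argument or citation of the Kahn--Sujatha functoriality reaches the same conclusion equally validly.
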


\begin{proof}
Let $\gamma$ be a product of $r$ elements of $J_E(X)$.  If
$\gamma^\circ=0$, then \cref{thm:KS} says that $\gamma$ is supported on
$D\times X$ for a proper closed $D\subset X$.  By Lemma
\ref{lem:support-factorisation}, it factors through a sum of motives
\[
 h(T)(\dim X-\dim T),\quad \dim T<\dim X.
\]
After any extension $F/k$, its action on
$\CH_0(X_F)=\Hom\bigl(h(\operatorname{Spec}F),h(X_F)\bigr)$
therefore factors through
\[
 \Hom\bigl(h(\operatorname{Spec}F),
            h(T_F)(\dim X-\dim T)\bigr)
 =\CH^{\dim X}(T_F)=0.
\]
This proves (i)$\Rightarrow$(ii), and (ii)$\Rightarrow$(iii) is immediate.
Conversely, Theorem \ref{thm:KS} identifies $\gamma^\circ$ with
$\gamma_*[\eta_X]$.  Condition~(iii) kills this class, hence
$\gamma^\circ=0$ and (i) follows.
\end{proof}

\begin{proposition}
\label{prop:birational-ideal-RNZC}
If $X$ and $Y$ are smooth projective geometrically integral birational
varieties in characteristic zero, then $B_E(X)^r=0$ for every $E/k$ if and
only if $B_E(Y)^r=0$ for every $E/k$.  
\end{proposition}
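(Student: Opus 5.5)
By symmetry it suffices to prove one implication: assume $B_E(Y)^r=0$ for every $E/k$, fix an extension $E/k$, and show $B_E(X)^r=0$. By \cref{lem:birational-ideal-CH0} this is equivalent to showing that every product $\gamma=\gamma_1\cdots\gamma_r$ with $\gamma_i\in J_E(X)$ acts trivially on $\CH_0(X_F)$ for every $F/k$. The plan is to transport each $\gamma_i$ to an element of $J_E(Y)$ using the birational map, and to exploit that $\hcirc X\simeq\hcirc Y$ in $\Chowo(k)$. This isomorphism holds because, by \cref{thm:KS}, the classes of the closures $\Gamma$ and ${}^t\Gamma$ of the graph of a birational map $f\colon X\dashrightarrow Y$ restrict at the generic points to mutually inverse classes: their composite is the identity modulo classes supported on $D\times X$ with $D\subsetneq X$.

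\emph{Step 1: transport.} Put $\phi=[\Gamma]\colon h(X)\to h(Y)$ and $\psi=[{}^t\Gamma]\colon h(Y)\to h(X)$. For $\gamma_i\in J_E(X)$ set $\delta_i=\phi\gamma_i\psi\in\End(h(Y))$. Since $(\gamma_i)_E=0$, we get $(\delta_i)_E=0$, so $\delta_i\in J_E(Y)$. Write $\psi\phi=1_{h(X)}+\epsilon$, where $\epsilon$ is supported on $D\times X$ by \cref{thm:KS}. The same holds for $\phi\psi$ on $Y$.

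\emph{Step 2: compare products.} We have
\[
\delta_1\cdots\delta_r=\phi\gamma_1(1+\epsilon)\gamma_2(1+\epsilon)\cdots(1+\epsilon)\gamma_r\psi .
\]
In the birational category $\epsilon^\circ=0$, so $(\delta_1\cdots\delta_r)^\circ=\phi^\circ\gamma^\circ\psi^\circ$. The hypothesis gives $(\delta_1\cdots\delta_r)^\circ=0$, and since $\phi^\circ,\psi^\circ$ are inverse isomorphisms, it follows that $\gamma^\circ=0$. This is just the statement that conjugation by the isomorphism $\phi^\circ$ identifies the image of $J_E(X)$ in $\End(\hcirc X)$ with a subset of $B_E(Y)$, and hence $B_E(X)^r\subseteq(\psi^\circ)B_E(Y)^r(\phi^\circ)=0$.

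\emph{Main obstacle.} The delicate point is that $\gamma^\circ$ must be computed in a ring where $\epsilon$ genuinely vanishes. For this one needs that the ideal of morphisms factoring through some $P(1)$ contains all classes supported on $D\times X$. This follows from \cref{lem:support-factorisation}(a): such classes factor through $h(T)(\dim X-\dim T)$ with $\dim T<\dim X$, which are positive Tate twists. The same lemma is needed to see that the quotient functor is compatible with composition, so that $(\phi\gamma_i\psi)^\circ=\phi^\circ\gamma_i^\circ\psi^\circ$; this is just functoriality of the quotient. Geometric integrality ensures that $X_F,Y_F$ remain integral and birational for every $F$, so \cref{thm:KS} and \cref{lem:birational-ideal-CH0} apply uniformly in $E$. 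Finally, the exponent $r$ is unchanged because each $\gamma_i$ is transported individually, without any extra factor.
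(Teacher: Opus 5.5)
Your proof is correct and is essentially the paper's argument. Both transport each $\gamma_i$ to $\delta_i=\phi\gamma_i\psi\in J_E(Y)$ and cancel the composite $\psi\phi$, which equals $\Delta_X$ up to classes supported on $D\times X$.

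The only difference is where the cancellation happens. The paper does it on the action on $\CH_0(X_{k(X)})$, citing Gille for $(\psi\phi)_*=\id$, and concludes via \cref{lem:birational-ideal-CH0}(iii). You do it directly in $\End(\hcirc X)$ via \cref{thm:KS}, exactly as the paper does in the proof of \cref{thm:birational-retract}. Your appeals to geometric integrality and to the $\CH_0(X_F)$ reformulation are harmless but not needed.
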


\begin{proof}
Put $L=k(X)=k(Y)$.  Let $a:h(X)\to h(Y)$ and $b:h(Y)\to h(X)$ be the
closures of the graphs of inverse birational maps.  Their actions on
zero-cycles over $L$ are mutually inverse
\cite[Theorem~1.2]{Gille2018}.  For
$\gamma_1,\ldots,\gamma_r\in J_E(X)$, put
$\delta_i=a\gamma_i b\in J_E(Y)$.  On $\CH_0(Y_L)$ one has
\[
 (\delta_1\cdots\delta_r)_*
 =a_*(\gamma_1\cdots\gamma_r)_*b_*,
\]
because $(ba)_*=\id$ on $\CH_0(X_L)$.  If the left-hand side is zero, then
pre- and post-composition with $b_*$ and $a_*$ shows that the middle action
is zero.  Now apply Lemma \ref{lem:birational-ideal-CH0}; the converse is symmetric.
\end{proof}

We shall use the following weighted local-block version of Rost's
multilinear filtration.  Here $X^{(i)}$ denotes the set of points of
codimension $i$.

\begin{lemma}
\label{lem:sharp-Rost-filtration}
Over an arbitrary field, let $X$ be smooth, projective, and integral of
dimension $d$,
and let $J\subseteq\End(h(X))$ be a two-sided ideal.  Suppose that integers
$n_i\geq1$ satisfy
\[
 \bigl(J_{k(x)}^{n_i}\bigr)_*\CH_i(X_{k(x)})=0
 \qquad(x\in X^{(i)},\ 0\leq i\leq d).
\]
Here $J_{k(x)}$ denotes the image of $J$ under base change.
Then
\[
 J^{\,n_0+\cdots+n_d}=0.
\]
\end{lemma}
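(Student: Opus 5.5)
We follow Rost's original nilpotence argument, in the multilinear form: we filter the diagonal/generic point information by codimension of support and spend a block of $n_i$ factors at each level $i$. Concretely, we prove by descending or ascending induction on $i$ the statement
\[
 (P_i)\qquad \text{every product of } n_0+\cdots+n_{i-1} \text{ elements of } J \text{ is supported on } X\times W \text{ or } W\times X,\ \operatorname{codim}W\geq i,
\]
or more precisely, in the dual formulation, that such a product $\gamma$ acts trivially on $\CH_j(X_F)$ for $j<i$ for all $F/k$, in the sense needed for Rost's lemma. The cleanest bookkeeping I would use: view $\gamma\in\CH^d(X\times X)$ as a cycle on $X\times X$ and filter by the dimension of its image under the first projection, i.e. $\gamma$ lies in $F^i$ if it is rationally equivalent to a cycle supported on $W\times X$ with $\operatorname{codim}_XW\geq i$. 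Then $F^0=\End(h(X))$, $F^{d+1}=0$, and each $F^i$ is a right ideal-like piece stable under right multiplication by correspondences, via localisation.

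Key step: show $F^i\cdot J^{n_i}\subseteq F^{i+1}$ (with the right ordering convention). Take $\gamma\in F^i$ supported on $W\times X$ with $W$ of codimension $i$, and let $\beta$ be a product of $n_i$ elements of $J$. Composition $\beta\circ\gamma$ (or $\gamma\circ\beta$, according to the covariant convention (\ref{eq:covariant-convention})) is supported on $W\times X$ as well. By localisation
\[
 \CH(W\times X)\to \bigoplus_{x\in W^{(0)}}\CH(X_{k(x)})\to 0
\]
with kernel coming from lower-dimensional pieces of $W$, so it suffices to show the restriction of $\beta\circ\gamma$ to each generic point $x$ of $W$ (a point of $X^{(i)}$) vanishes. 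This restriction equals $(\beta_{k(x)})_*(\gamma|_{x})$ where $\gamma|_x\in \CH_i(X_{k(x)})$ by dimension count ($\gamma$ has dimension $d$, the fibre over $x$ has dimension $d-(d-i)=i$). The hypothesis $(J_{k(x)}^{n_i})_*\CH_i(X_{k(x)})=0$ kills it. Hence $\beta\circ\gamma$ is supported on $W'\times X$ with $\operatorname{codim}W'>i$, i.e. lies in $F^{i+1}$. Starting from $\id$ (or any element) in $F^0$ and iterating $i=0,\dots,d$ we get $J^{n_0+\cdots+n_d}\subseteq F^{d+1}=0$.

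The main obstacle is the compatibility step: checking that restricting the composite to the generic point $x$ of $W$ really equals the action of $\beta_{k(x)}$ on the fibre class $\gamma|_x\in\CH_i(X_{k(x)})$, and that the multilinear (non-power) form is harmless because the hypothesis is on the ideal power $J^{n_i}_{k(x)}$, not on single elements. I would verify the first via the standard formula for composition of correspondences with pullback along $\operatorname{Spec}k(x)\to X$ (Rost's lemma, as in Chernousov--Gille--Merkurjev), and note that a product of elements of $J$ base-changes to a product of elements of $J_{k(x)}$, so the hypothesis applies verbatim. Weighting by different $n_i$ requires nothing extra since each step only consumes its own block.
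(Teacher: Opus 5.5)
Your proof is correct and is essentially the paper's proof. Your codimension filtration $F^i$ is the paper's source-side support filtration $F_{d-i}$, and your key step $\beta\circ F^i\subseteq F^{i+1}$ for $\beta\in J^{n_i}$ (restriction to the generic points of $W$, then localisation) is the paper's step \eqref{eq:sharp-Rost-step}; you then consume the blocks $\beta_0,\ldots,\beta_d$ starting from the diagonal, exactly as the paper does. The only tidying needed is this: in the paper's convention the source-side support filtration is stable under left composition $\gamma\mapsto\beta\circ\gamma$, which is what you actually use, not under right multiplication, and the preliminary formulation $(P_i)$ with its ``dual'' variant can simply be dropped.
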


\begin{proof}
Let $F_p\End(h(X))$ be the support filtration generated by correspondences
whose projection to the first factor has image of dimension at most $p$;
thus $F_d=\End(h(X))$ and $F_{-1}=0$.  Write
$\beta_*\gamma=\beta\gamma$ for left composition.  We first spell out the
indexing in Rost's argument.  If $\beta$ acts trivially on
$\CH_i(X_{k(x)})$ for every $x\in X^{(i)}$, then
\begin{equation}\label{eq:sharp-Rost-step}
 \beta_*F_{d-i}\subseteq F_{d-i-1}.
\end{equation}
Indeed, for a generator whose first projection has dimension $d-i$, its
restriction over the generic point $x$ of that projection is a class in
$\CH_i(X_{k(x)})$.  The restriction of its composite with $\beta$ is zero,
so localisation moves that composite over a proper closed subset of the
projection.  Generators already in $F_{d-i-1}$ stay there; compare \cite[Lemma~2.2]{Gille2018}.

Take an arbitrary word $w$ of length $N=\sum_i n_i$ in $J$.  In our
composition convention write it as $w=\beta_d\cdots\beta_1\beta_0,$
where the rightmost block $\beta_i$ has length $n_i$ after the blocks
$\beta_0,\ldots,\beta_{i-1}$ have been removed.  Each $\beta_i$ satisfies
the local vanishing in degree $i$.  Starting with the diagonal
$\Delta_X\in F_d$, \eqref{eq:sharp-Rost-step} gives
\[
 F_d\xrightarrow{\,\beta_0\,}F_{d-1}
 \xrightarrow{\,\beta_1\,}\cdots
 \xrightarrow{\,\beta_d\,}F_{-1}=0.
\]
The resulting cycle is $w_*\Delta_X$, namely the correspondence $w$ itself.
It is therefore zero.
\end{proof}

\begin{remark}
The proof of \cite[Lemma~2.2]{Gille2018} instead evaluates the same zero
operator on one further kernel element and consequently records the weaker
power $N+1$.  Evaluation on $\Delta_X$ gives the asserted sharp power $N$;
compare the explicitly multilinear formulation in
\cite[Lemma~3.2]{Gille2026}.    
\end{remark}

\begin{corollary}
\label{cor:improved-surfaces}
Use integral coefficients.
\begin{enumerate}[label=\textup{(\alph*)}]
\item If $S$ is a smooth projective integral surface over a
      field of characteristic zero, then
      \[
       B_E(S)^3=0,\quad I_E(h(S))^5=0
      \]
      for every extension $E/k$.
\item If $S$ is geometrically rational, over a field of arbitrary
      characteristic, then $I_E(h(S))^4=0$ for every extension $E/k$.
      Its ideal zero-cycle exponent is at most $2$.
\end{enumerate}
\end{corollary}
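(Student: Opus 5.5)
The plan is to apply Lemma \ref{lem:sharp-Rost-filtration} with $d=2$ and $J=I_E(h(S))$, and to bound the three local exponents $n_0,n_1,n_2$ separately. Since $J$ is an ideal, any product of $n_i$ elements of $J$ is again in $J$. Base change to $k(x)$ sends $J$ into $I_{E'}(h(S_{k(x)}))$, where $E'$ is a common overfield of $E$ and $k(x)$ constructed as in the proof of \cref{thm:generic-descent}. So in every degree we need to control how products from a base-change kernel act on the relevant Chow group.

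\emph{Degree $i=2$.} Here $x$ is a closed point and we consider $\CH_2(S_{k(x)})=\CH^0$, which is generated by the fundamental class. An element vanishing over $E'$ acts on $\CH^0$ by a multiple of the identity, and this multiple can be read off after base change. Hence it acts by zero, and $n_2=1$.

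\emph{Degree $i=1$.} Here $x$ is a curve point and the group is $\CH_1(S_{k(x)})=\operatorname{Pic}$. The kernel of $\operatorname{Pic}(S_F)\to\operatorname{Pic}(S_{F'})$ is controlled by the Brauer group, via the Hochschild--Serre exact sequence. It is killed in one step modulo classes coming from the Galois-invariant part. I expect $n_1=1$: a correspondence killed over $E'$ acts on $\operatorname{Pic}(S_F)$ through $\operatorname{Pic}(S_{FE'})$ up to a torsion-free/injectivity argument. I would first try injectivity of $\operatorname{Pic}(S_F)\to\operatorname{Pic}(S_{F'})$ modulo algebraic equivalence, combined with the action on $\operatorname{Pic}^0$ being determined by the induced endomorphism of the Picard variety. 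That endomorphism is faithful under field extension.

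\emph{Degree $i=0$.} The group is $\CH_0(S_{k(S)})$, and Lemma \ref{lem:birational-ideal-CH0} reduces this to showing $B_E(S)^3=0$. For this I would use the Bloch--Srinivas/Roitman type filtration of zero-cycles: degree, Albanese, and the Albanese kernel $T(S_F)$. Each step is killed by one kernel element in characteristic zero; this is the Rosenschon--Sawant input. That gives $n_0=3$ and hence $N=1+1+3=5$.

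For part (b) the situation is simpler because $S$ is geometrically rational. The Albanese is trivial, so $\CH_0$ of degree zero is torsion. By Colliot-Thélène's results it is controlled by $H^1$ of the Néron--Severi torus, and the Albanese step disappears, giving $n_0=2$. Combined with $n_1=n_2=1$, this yields $4$ and the ideal zero-cycle exponent $2$.

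The \textbf{main obstacle} is the degree-$1$ step with exponent exactly $1$, and the zero-cycle step at the \emph{ideal} level rather than elementwise. For the zero-cycle step, the argument must be multilinear: each successive factor drops the cycle into the next graded piece (degree, Albanese, $T$) independently of which element is used. I would verify this by checking that each graded piece is functorial for correspondences and injects under extension up to the next piece.
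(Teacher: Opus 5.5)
Your overall frame matches the paper's. You apply \cref{lem:sharp-Rost-filtration} with block lengths $(3,1,1)$ and $(2,1,1)$, you identify $n_0$ with the bound on $B_E(S)$ via \cref{lem:birational-ideal-CH0}, and for (b) you use injectivity of $A_0$ into $H^1$ of the N\'eron--Severi torus, which is essentially Gille's argument. The genuine gap is the zero-cycle step in (a), which is where the number $3$ comes from. Your mechanism (degree, Albanese, Albanese kernel $T(S_F)$, with one kernel element per step) does not produce it.

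For $\alpha\in J$ and $z\in\CH_0(S_F)$, the cycle $\alpha_*z$ dies over a common overfield $\Omega$. So not only its degree but also its Albanese image vanishes, because the Albanese map commutes with base change and $\operatorname{Alb}_S(F)\to\operatorname{Alb}_S(\Omega)$ is injective. Hence a \emph{single} kernel element already carries $\CH_0(S_F)$ into $T(S_F)\cap\Ker\bigl(\CH_0(S_F)\to\CH_0(S_\Omega)\bigr)$. Suppose, as you assert, that one further kernel element annihilated $T(S_F)$. Then you would get zero-cycle exponent $2$ and $I_E(h(S))^4=0$ for \emph{every} surface in characteristic zero. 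That is exactly the bound the paper can prove only for geometrically rational surfaces. Nothing in Rosenschon--Sawant gives such an annihilation. A correspondence vanishing after base change only sends $T(S_F)$ into the torsion of the base-change kernel, and it is this torsion that needs two more factors.

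The missing input is the multilinear form of \cite[Theorem~4]{Gille2014}, which the paper quotes to get $(\alpha_1\alpha_2\alpha_3)_*\CH_0(S_{k(S)})=0$. Its mechanism is a Hochschild--Serre filtration, not the Albanese filtration:
\begin{enumerate}[label=\textup{(\arabic*)}]
\item Reduce, as in the proof of \cref{thm:ideal-RNZC-threefold}, to a finite Galois extension $K/L$ over a purely transcendental $L/F$ such that all $\alpha_i$ vanish over $K$.
\item The torsion cycle class map $\CH^2(S_L)\{\ell\}\to H^3_{\text{\'et}}(S_L,\QlZl(2))$ is injective (Merkurjev--Suslin) and equivariant for correspondences. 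So the torsion cycle $\alpha_{1*}z$ is detected in the first filtration step for $K/L$.
\item Each of $\alpha_2$ and $\alpha_3$ raises the filtration level by one.
\item The third level comes from the cohomology of $L$. There cycle classes vanish, because the generic point gives an $L$-rational point.
\end{enumerate}

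Two smaller points. In the divisor step you have kernel and cokernel reversed. The Brauer group controls the cokernel of $\operatorname{Pic}(S_F)\to\operatorname{Pic}(S_{F'})^{\operatorname{Gal}(F'/F)}$, while the kernel vanishes by Hilbert~90. So $\operatorname{Pic}(S_F)\to\operatorname{Pic}(S_\Omega)$ is injective (componentwise over the field of constants), and $n_1=1$ is immediate, without algebraic equivalence or the Picard variety. In (b) you should also check that $J_E(S)$ lies in the geometric kernel. Use a common overfield of $E$ and $\bar k$ together with faithfulness of base change for the split motive $h(S_{\bar k})$. This is what makes the correspondences act trivially on the N\'eron--Severi torus.
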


\begin{proof}
For (a), the multilinear form of
\cite[Theorem~4]{Gille2014}, used explicitly with three different
correspondences in \cite[Section~4.3]{Gille2026}, gives
\[
 (\alpha_1\alpha_2\alpha_3)_{k(s),*}\CH_0(S_{k(s)})=0
\]
for all $s\in S$, hence ideal zero-cycle exponent $3$.  For (b), Gille's
argument gives
$(\alpha_1\alpha_2)_*\CH_0(S_{k(s)})=0$, hence exponent $2$
for the geometric kernel \cite[Section~4.2]{Gille2026}.  The same bound
holds for $J_E(S)$: over a common overfield of $E$ and $\bar k$, vanishing
over $E$ and injectivity of base change for the split motive $h(S_{\bar k})$
imply vanishing over $\bar k$.  Thus $J_E(S)$ is contained in the geometric
kernel.  In both cases one kernel element acts
trivially on divisors after every residue-field extension, by the
Hochschild--Serre argument of \cite[Lemma~1.4]{Gille2018}, and it acts
trivially on top-dimensional cycles.  Thus the local block lengths are
$(3,1,1)$ in (a) and $(2,1,1)$ in (b).  Now apply Lemma 
\ref{lem:sharp-Rost-filtration}.
\end{proof}

\begin{theorem}
\label{thm:ideal-RNZC-threefold}
Let $X$ be a smooth projective geometrically integral threefold over a field
of characteristic zero, use integral coefficients, and let $E/k$ be any
extension.  If
\[
 B_E(X)^r=0,
\]
then
\[
 I_E(h(X))^{r+5}=0.
\]
Consequently, a bound $r$ independent of $E$ gives the uniform Rost exponent
$r+5$ for $h(X)$.
\end{theorem}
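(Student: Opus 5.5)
The plan is to apply Lemma~\ref{lem:sharp-Rost-filtration} with $d=3$ and $J=J_E(X)$, aiming for local block lengths $(n_0,n_1,n_2,n_3)=(r,2,2,1)$, so that $N=r+5$. The four degrees will be handled separately.

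\emph{Degree $0$.} Here the points are $x\in X^{(0)}$, that is, the generic point, and $k(x)=k(X)$. We need $(J^r_{k(X)})_*\CH_0(X_{k(X)})=0$. This is condition (iii) of Lemma~\ref{lem:birational-ideal-CH0}, which is equivalent to $B_E(X)^r=0$. So the hypothesis gives $n_0=r$ directly.

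\emph{Degree $3$.} Top-dimensional cycles on $X_{k(x)}$ are multiples of the fundamental class. Every $\alpha\in J$ vanishes over $E$. Passing to a common overfield of $E$ and $k(x)$, as in the proof of Theorem~\ref{thm:generic-descent}, the degree of $\alpha_*[X]$ is unchanged and $\CH_3(X)\cong\mathbb Z$ injects. Hence $\alpha$ kills $\CH_3$, and $n_3=1$, exactly as in the surface case of Corollary~\ref{cor:improved-surfaces}.

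\emph{Degrees $1$ and $2$.} These are the main steps, and I need $n_1=n_2=2$. Degree $2$ means divisors on $X_{k(x)}$ for $x$ of codimension $1$. For divisors I will use the Hochschild--Serre argument of \cite[Lemma~1.4]{Gille2018}. $\operatorname{Pic}$ of a smooth projective geometrically integral variety injects into $\operatorname{Pic}$ after extension to a separable closure, modulo a Brauer obstruction. More precisely, $\CH^1(X_F)\to\CH^1(X_{\bar F})^{G}$ has kernel controlled by $\operatorname{Br}(F)$, and the composite of two elements of $J$ kills $\CH^1$ after every extension:
\begin{enumerate}
\item the first element maps into the kernel of base change to $\bar F$;
\item the second element kills that kernel, since it lies in the image of $\operatorname{Br}(F)$, which is functorial and on which an element vanishing geometrically acts by zero.
\end{enumerate}
For the surface case in Corollary~\ref{cor:improved-surfaces} one element sufficed. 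For a threefold I would allow the weaker exponent $2$, to absorb the extra step coming from the fact that $\operatorname{Pic}^0$ is nontrivial.

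Degree $1$ means $1$-cycles on $X_{k(x)}$ for $x$ a codimension-$2$ point. I would use the transpose trick. The transpose ${}^t\alpha$ of a kernel element is again a kernel element, and composition with ${}^t\alpha$ pairs $\CH_1$ with $\CH^1$ via the intersection pairing. Alternatively, I would reduce to divisors on a resolution of the closure of $x$ using Lemma~\ref{lem:support-factorisation}(a), which leads to a factorisation through motives of surfaces. The surface bound for $I_E(h(S))$ with zero-cycle data, Corollary~\ref{cor:improved-surfaces}(a), would then be replaced by its degree-one piece, which is handled by the divisor argument, again giving exponent $2$.

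The main obstacle I expect is the degree-$1$ estimate for $1$-cycles on a threefold over a residue field of transcendence degree one. Gille's original argument in \cite{Gille2018} handles this elementwise via a Brown--Gersten--Quillen and Merkurjev--Suslin type computation. If that argument yields only exponent $2$ multilinearly, as I expect, the bound $r+2+2+1=r+5$ follows from Lemma~\ref{lem:sharp-Rost-filtration}. Since the bound does not depend on $E$ beyond $r$, the uniform statement is immediate.
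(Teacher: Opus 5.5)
\textbf{Genuine gap.} Your frame is the paper's: apply \cref{lem:sharp-Rost-filtration} with $d=3$, take $n_0=r$ from \cref{lem:birational-ideal-CH0}(iii), and $n_3=1$ from the fundamental class. The gap is the claim $n_1=2$. Your whole distribution $(r,2,2,1)$ rests on it, and it is never proved.

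First, your indexing is reversed. In \cref{lem:sharp-Rost-filtration} the block $n_i$ controls $\CH_i(X_{k(x)})$ for $x\in X^{(i)}$. So divisors ($i=2$) sit over codimension-two points, while $1$-cycles ($i=1$) sit over codimension-one points, whose residue fields have transcendence degree two, not one.

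For these $1$-cycles neither mechanism you suggest works.
\begin{enumerate}
\item The transpose trick fails because vanishing of all intersection numbers $\alpha_*z\cdot D$ says nothing about $\alpha_*z$ in $\CH_1$: the pairing $\CH_1\times\CH^1\to\mathbb Z$ does not see torsion or homologically trivial $1$-cycles.
\item Factoring a generator supported on $\overline{\{x\}}\times X$ through $h(T)(1)$ with $\dim T=2$, via \cref{lem:support-factorisation}(a), does not produce divisors on $T$. The composite lies in $\CH^2(T\times X)$. Its restriction to the generic point of $T$ is again a $1$-cycle on $X_{k(T)}$, and transposed it is a zero-cycle on $T_{k(X)}$. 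This is a zero-cycle-on-surfaces type problem, and there the available multilinear bound uses three correspondences, not two; compare \cref{cor:improved-surfaces}(a).
\end{enumerate}
Expecting an elementwise Brown--Gersten--Quillen computation to become multilinear with exponent $2$ is not an argument.

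\textbf{How the paper balances the blocks.} It uses $(r,3,1,1)$, with the same total $r+5$, and every block is justified.

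Divisors cost only one factor. For smooth projective geometrically integral $X_F$ and Galois $F'/F$ with group $G$, the kernel of $\operatorname{Pic}(X_F)\to\operatorname{Pic}(X_{F'})$ is $H^1(G,(F')^{\times})=0$ by Hilbert~90. The Brauer group controls the cokernel onto $\operatorname{Pic}(X_{F'})^{G}$, not the kernel, and $\operatorname{Pic}^0$ plays no role. So the second factor you spend in degree $2$ is wasted.

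That factor is needed in degree $1$, where the paper argues as follows. Take $x\in X^{(1)}$, $F=k(x)$, $z\in\CH_1(X_F)$ and $\alpha_1,\alpha_2,\alpha_3\in J$.
\begin{enumerate}
\item Descend the finitely many rational equivalences to a finitely generated $E_0\subseteq E$.
\item Choose fields $K\supseteq E_0F\supseteq L\supseteq F$ with $L/F$ purely transcendental and $K/L$ finite Galois, such that all $\alpha_i$ vanish over $K$.
\item Use $x$ to obtain an $L$-point of $X_L$.
\item Apply the multilinear last assertion of \cite[Theorem~4]{Gille2014} over $L$, with $X_1=\operatorname{Spec}L$ and $X_2=X_3=X_L$, to $\alpha_1,\alpha_2$ and to the cycle $\xi=(\alpha_3)_*z$, which already vanishes over $K$.
\item Descend along the injection $\CH_1(X_F)\to\CH_1(X_L)$.
\end{enumerate}
One correspondence is spent making $\xi$ split, and Gille's theorem consumes two more; that is where $n_1=3$ comes from.

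To repair your proof, either prove a two-element annihilation of $\CH_1(X_{k(x)})$ for $x\in X^{(1)}$, which nothing you cite provides, or rebalance to $(r,3,1,1)$ using the arguments above.
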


\begin{proof}
Put $J=J_E(X)$.  By Lemma \ref{lem:birational-ideal-CH0}, the codimension-zero
block length is $n_0=r$.  The divisor and top-cycle arguments in the
preceding proof give $n_2=n_3=1$.

It remains to record the mixed codimension-one bound $n_1=3$.  Let
$x\in X^{(1)}$, let $z\in\CH_1(X_{k(x)})$, and choose
$\alpha_1,\alpha_2,\alpha_3\in J$.  Put $F=k(x)$.  As in
\cite[proof of Theorem~2.1]{Gille2018}, the finitely many rational
equivalences descend to a finitely generated subextension $E_0/k$ of $E$,
and there are extensions
\[
 K\supseteq E_0F\supseteq L\supseteq F
\]
such that $L/F$ is purely transcendental, $K/L$ is finite Galois, and
all three correspondences vanish over $K$.  The point $x$ induces an
$F$-point of $X_F$ and hence an $L$-point of $X_L$.  The cycle
$\xi=(\alpha_3)_*z$ also vanishes over $K$.  Apply the multilinear last
assertion of \cite[Theorem~4]{Gille2014} over $L$, with
$X_1=\operatorname{Spec}L$ and $X_2=X_3=X_L$, to $\xi_L$ and
$\alpha_{1,L},\alpha_{2,L}$.  It gives
\[
 \bigl((\alpha_1\alpha_2\alpha_3)_*z\bigr)_L=0.
\]
Since $L/F$ is purely transcendental, base change
$\CH_1(X_F)\to\CH_1(X_L)$ is injective (indeed, an isomorphism), and hence
$(\alpha_1\alpha_2\alpha_3)_*z=0$ over $F$.
Therefore the local lengths are
$(n_0,n_1,n_2,n_3)=(r,3,1,1).$
Their sum is $r+5$, and Lemma \ref{lem:sharp-Rost-filtration} applies.
\end{proof}

\begin{corollary}
\label{cor:element-RNZC}
In the situation of \cref{thm:ideal-RNZC-threefold}, let
$a\in I_E(h(X))$.  If $(a^\circ)^q=0$, then
\[
 a^{q+5}=0.
\]
If $E/k$ is finite Galois, one may take the smaller of this exponent and
$q(2\min\{\operatorname{cd}(k),4\}+3)$.
\end{corollary}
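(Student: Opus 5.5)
The plan is to rerun the weighted Rost filtration of Lemma~\ref{lem:sharp-Rost-filtration} on the single element $a$, with all blocks taken to be powers of $a$. Put $J=I_E(h(X))$ and consider the word $a^{q+5}=a^{1}a^{1}a^{3}a^{q}$. Here the rightmost block $\beta_0=a^q$ is applied first, followed by $\beta_1=a^3$, $\beta_2=a$ and $\beta_3=a$. I then need to check the local vanishing hypotheses block by block.

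For $\beta_0$, the hypothesis $(a^\circ)^q=0$ says that $a^q$ maps to zero in $\End(\hcirc X)$. The argument of Lemma~\ref{lem:birational-ideal-CH0}, (i)$\Rightarrow$(ii), applies verbatim to the single product $\gamma=a^q$. That is, Theorem~\ref{thm:KS} puts $a^q$ on $D\times X$, and Lemma~\ref{lem:support-factorisation} factors it through twists $h(T)(\dim X-\dim T)$ with $\dim T<\dim X$. So $a^q$ kills $\CH_0(X_F)$ for every $F$, in particular at the generic point, which is the codimension-zero condition. For the blocks $\beta_2,\beta_3$, one kernel element kills divisors and top-dimensional cycles after every residue field extension, exactly as in the proof of Corollary~\ref{cor:improved-surfaces}. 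For $\beta_1=a^3$, I take $\alpha_1=\alpha_2=\alpha_3=a$ in the codimension-one argument of Theorem~\ref{thm:ideal-RNZC-threefold}. The descent to $L$, the application of \cite[Theorem~4]{Gille2014}, and injectivity along the purely transcendental $L/F$ all go through unchanged.

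Reading the proof of Lemma~\ref{lem:sharp-Rost-filtration} with these specific blocks instead of arbitrary elements of $J$, the filtration steps send $\Delta_X\in F_3$ down to $F_{-1}=0$, so $a^{q+5}=0$. The one point to watch is that the lemma as stated uses an ideal power. Its proof, however, only uses the local vanishing of each block, so the elementwise version holds by the same argument.

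For the Galois refinement, I would apply Corollary~\ref{cor:surface-bounds}(a). Since $a^q$ factors through $\bigoplus_j h(T_j)(3-\dim T_j)$ with $\dim T_j\le2$, there may be several $T_j$. I would replace the sum by a single scheme $T=\coprod_j T_j$, taking $T_j\times\mathbb P^{2-\dim T_j}$ to equalize twists as in Corollary~\ref{cor:low-dimensional-sums}. The Rosenschon--Sawant argument is stated for smooth projective schemes of dimension at most two, so it covers disjoint unions. The main obstacle is checking that the twists can indeed be equalized, i.e.\ that the factorisation through $h(T_j)(3-\dim T_j)$ can be rewritten as a factorisation through one motive $h(S)(1)$ with $\dim S\le 2$. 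I would use the projective bundle summand $h(T_j)(1)\subset h(T_j\times\mathbb P^1)$ for curves, and the point $h(\operatorname{Spec}k)(3)\subset h(\mathbb P^2)(1)$ for points. The refined bound of Corollary~\ref{cor:surface-bounds}(a) then gives $a^{q(2\min\{\operatorname{cd}(k),4\}+3)}=0$, and $a$ is killed by the smaller of the two exponents.
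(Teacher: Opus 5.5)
Your proposal is correct and is essentially the paper's own proof: you use the same elementwise reading of Lemma~\ref{lem:sharp-Rost-filtration} with local block lengths $(q,3,1,1)$, and for the Galois case the same equalisation of twists via $T_j\times\mathbb P^{2-\dim T_j}$ into $h(S)(1)$ for one possibly disconnected surface $S$, followed by Rosenschon--Sawant and the square trick. The only slip is cosmetic: for a zero-dimensional $T_j=\operatorname{Spec}k'$ the summand you need is $h(T_j)(3)\subset h(T_j\times\mathbb P^2)(1)$ rather than $h(\operatorname{Spec}k)(3)\subset h(\mathbb P^2)(1)$, and the same projective-bundle formula provides it.
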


\begin{proof}
Since $(a^q)^\circ=0$, the proof of Lemma 
\ref{lem:birational-ideal-CH0}, applied to the single correspondence
$\gamma=a^q$, shows that $a^q$ acts trivially on the generic zero-cycle
group.  Apply the proof of \cref{thm:ideal-RNZC-threefold} to the
single element $a$, with local lengths $(q,3,1,1)$.  Moreover, Lemma 
\ref{lem:support-factorisation} factors $a^q$ through a finite sum of
motives $h(T)(3-\dim T)$ with $\dim T\leq2$.  Replacing $T$ by
$T\times\mathbb P^{2-\dim T}$ and using the projective-bundle formula embeds
each such motive as a summand of $h(S)(1)$ for one possibly disconnected
smooth projective surface $S$.  For a finite Galois extension the
Rosenschon--Sawant estimate for $S$, combined with the square trick as in Corollary 
\ref{cor:surface-bounds}, gives the second bound.
\end{proof}

Following \cite{Gille2018}, a three-dimensional toric model means a smooth
projective geometrically integral variety with an action of a $k$-torus
$T$ and a $T$-equivariant open immersion of a $T$-torsor.

\begin{theorem}
\label{thm:toric-strong}
Let $X$ be a smooth projective geometrically integral threefold over a field
of characteristic zero.  If $X$ is birational to a three-dimensional toric
model, then, with integral coefficients,
\[
 I_E(h(X))^7=0
\]
for every field extension $E/k$.
\end{theorem}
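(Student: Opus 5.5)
The plan is to reduce to \cref{thm:ideal-RNZC-threefold} with $r=2$, since $2+5=7$. It therefore suffices to show that $B_E(X)^2=0$ for every extension $E/k$. By \cref{prop:birational-ideal-RNZC}, this condition (with the same exponent $r=2$) is a birational invariant among smooth projective geometrically integral varieties in characteristic zero. I will first choose a smooth projective three-dimensional toric model $Y$ birational to $X$, which exists by the definition recalled above. The problem then reduces to proving $B_E(Y)^2=0$ for all $E/k$.

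For the toric model $Y$, I will use Lemma \ref{lem:birational-ideal-CH0}. By criterion (iii), it is enough to show that for any $\alpha_1,\alpha_2\in J_E(Y)$ the composite $(\alpha_1\alpha_2)_*$ kills $\CH_0(Y_{k(Y)})$. This is the Brown--Gersten--Quillen computation of \cite{Gille2018}, and I will carry it out bilinearly rather than for a single correspondence. Set $F=k(Y)$ and $z\in\CH_0(Y_F)$. As in the proof of \cref{thm:ideal-RNZC-threefold}, I will descend to a finitely generated subextension and pass to $L\supseteq F$ purely transcendental with $K/L$ finite Galois over which $\alpha_1,\alpha_2$ vanish. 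This is harmless because $\CH_0(Y_F)\to\CH_0(Y_L)$ is injective.

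Over $L$, the toric structure makes the degree-zero part of $\CH_0(Y_L)$ controlled by the torus: its kernel under $K/L$ is computed via Galois cohomology of the character lattice, following \cite{Gille2018}. The key point is that a correspondence vanishing over $K$ maps $\CH_0(Y_L)$ into the subgroup of classes vanishing over $K$, which is a torsion group $A$. A second such correspondence then acts on $A$ as zero. The reason is that Gille's argument shows the action of a kernel element on $\CH_0$ factors through a map that kills anything already trivial over $K$. In his proof this is a statement about the induced endomorphism of the Galois-module part, which the kernel element annihilates. So the plan is: $\xi=(\alpha_2)_*z$ lies in $\ker(\CH_0(Y_L)\to\CH_0(Y_K))$, and $(\alpha_1)_*\xi=0$ by the same mechanism. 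This gives $(\alpha_1\alpha_2)_*z=0$.

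The main obstacle is the second step, namely checking that Gille's single-correspondence proof of $a^2_*=0$ genuinely uses only that each of the two factors vanishes over $K$, and never that they are equal. I expect this to hold: his argument passes through the map $\CH_0(Y_L)\to \CH_0(Y_K)$ and the BGQ identification of its kernel, and only the first factor's vanishing is used to land in the kernel and only the second's to kill it. I would rewrite that argument with $\alpha_1\neq\alpha_2$ explicitly. Once $B_E(Y)^2=0$ holds for all $E$, \cref{prop:birational-ideal-RNZC} transfers it to $X$, and \cref{thm:ideal-RNZC-threefold} yields $I_E(h(X))^7=0$.
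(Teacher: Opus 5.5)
Your proposal follows the paper's proof: reduce to a toric model via Proposition~\ref{prop:birational-ideal-RNZC}, prove $B_E^2=0$ there using criterion (iii) of Lemma~\ref{lem:birational-ideal-CH0} and Gille's Brown--Gersten--Quillen lemma for two different correspondences, and conclude with Theorem~\ref{thm:ideal-RNZC-threefold} for $r=2$. The step you flag as the main obstacle is formal, because Gille's lemma already says that a single kernel element kills $A_0(X_F)$ for $F=k(X)$, and the other factor maps $\CH_0(X_F)$ into $A_0(X_F)$ since its image vanishes over the Galois splitting field $k'F$ and so has degree zero (the paper gets $k'$ from $\alpha_{\bar k}=0$, which holds because $h(X_{\bar k})$ is split, so your detour through a purely transcendental $L$ is unnecessary).
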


\begin{proof}
First let $X$ itself be a toric model and fix $E/k$.  If
$\alpha\in J_E(X)$, then $\alpha_{\bar k}=0$.  Indeed, take a common
overfield of $E$ and $\bar k$; the motive of $X_{\bar k}$ is split, so base
change on its endomorphism ring is injective.  The vanishing descends to a
finite Galois extension.

Put $F=k(X)$.  The variety $X_F$ has an $F$-point.  The
Brown--Gersten--Quillen argument in the final lemma of
\cite[pp.~70--71]{Gille2018} proves, for every $\alpha\in J_E(X)$,
\[
 \alpha_*A_0(X_F)=0,
\]
where $A_0$ is the kernel of the degree map.  For another
$\beta\in J_E(X)$, choose one finite Galois extension $k'/k$ over which both
correspondences vanish.  Geometric integrality makes $F/k$ regular, so
$k'F/F$ is Galois and is an admissible common splitting field.  For every $z\in\CH_0(X_F)$ the class $\beta_*z$ becomes zero over
$k'F$; hence it has degree zero and lies in $A_0(X_F)$.  Thus
$(\alpha\beta)_*\CH_0(X_F)=0$.  By Lemma 
\ref{lem:birational-ideal-CH0}, $B_E(X)^2=0$.

The same exponent $2$ holds on every birational smooth projective model by Proposition 
\ref{prop:birational-ideal-RNZC}.  Finally, apply
\cref{thm:ideal-RNZC-threefold} with $r=2$.
\end{proof}

\begin{lemma}
\label{lem:birational-direct-sum}
Let $Y_1,\ldots,Y_m$ be smooth projective integral varieties in
characteristic zero.  If $B_E(Y_i)^{r_i}=0$, then the image of
$I_E(\bigoplus_i h(Y_i))$ in
$\End(\bigoplus_i\hcirc{Y_i})$ has exponent at most $\sum_i r_i$.
\end{lemma}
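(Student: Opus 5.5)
The plan is to mimic the matrix argument of Lemma \ref{lem:direct-sum-bound}, but inside the quotient category $\Chowo(k)$, using the criterion of Lemma \ref{lem:birational-ideal-CH0} to control the diagonal blocks. Put $N=\bigoplus_i h(Y_i)$ and $J=I_E(N)$. An element $\gamma\in J$ is a matrix $(\gamma_{ij})$ with $\gamma_{ij}:h(Y_j)\to h(Y_i)$, and each $\gamma_{ij}$ vanishes over $E$. Write $J^\circ$ for the image of $J$ in $\End(\bigoplus_i\hcirc{Y_i})$; its elements are the matrices $(\gamma_{ij}^\circ)$. The key observation is that a diagonal block of any element of $J$ lies in $J_E(Y_i)$. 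It follows that the $(i,i)$ block of a product of elements of $J$ is a sum of words of the form
\[
 \gamma^{(1)}_{i i_1}\gamma^{(2)}_{i_1 i_2}\cdots\gamma^{(t)}_{i_{t-1} i},
\]
and each such word vanishes over $E$.

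Next I would set up a filtration argument in the spirit of the upper-triangular proof of Lemma \ref{lem:direct-sum-bound}. For each $i$, let $P_i\subseteq\End(\hcirc{Y_i})$ be the set of classes $c^\circ$ with $c\in J_E(Y_i)$; this is $B_E(Y_i)$. The obstacle is that off-diagonal loops $h(Y_i)\to h(Y_j)\to h(Y_i)$ also produce elements of $J_E(Y_i)$. The cleanest way to handle this is the standard lemma: if $A$ is a ring with idempotents $e_1+\cdots+e_m=1$ and $I$ is a two-sided ideal with $(e_iIe_i)^{r_i}=0$ for all $i$, then $I^{\sum r_i}=0$.

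I would prove this lemma by induction on $m$, with $e=e_1$ and $f=1-e$. Expanding a product of $r_1+r'$ elements of $I$, where $r'=\sum_{i\geq2}r_i$, each term is a word $x_1\cdots x_n$ interspersed with idempotents $e$ or $f$. One shows that the word contains either $r_1$ consecutive "returns to $e$" or $r'$ returns to $f$. Segments between consecutive visits to $e$ are elements of $eIe$, because $I$ is two-sided, and similarly for $f$. By the pigeonhole principle, a path of length $r_1+r'$ visits $e$ at least $r_1+1$ times or $f$ at least $r'+1$ times, producing a product of $r_1$ (respectively $r'$) elements of $eIe$ (respectively $fIf$), which is zero. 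For $fIf$ we apply induction inside the corner ring $fAf$ with the idempotents $e_2,\dots,e_m$. Here $e_i(fIf)e_i=e_iIe_i$, so the hypothesis holds there.

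Finally, I apply this lemma to $A=\End(\bigoplus\hcirc{Y_i})$ with $I=J^\circ$, which is a two-sided ideal because $J$ is one and the quotient map $\End(N)\to A$ is surjective. Then $e_iJ^\circ e_i$ is the image of $e_iJe_i$. This equals the image of $J_E(Y_i)$, since $e_iJe_i$ consists of exactly the $(i,i)$ corners, which are correspondences on $Y_i$ killed over $E$. The image of $J_E(Y_i)$ is $B_E(Y_i)$, and $B_E(Y_i)^{r_i}=0$ by assumption, so the lemma gives $(J^\circ)^{\sum r_i}=0$. I expect the main care to be needed in two places: verifying $e_iJe_i=J_E(Y_i)$ through the block identification, and the pigeonhole bookkeeping in the lemma, which needs to give the exact count $\sum r_i$ rather than something larger.
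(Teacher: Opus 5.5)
Your proposal is correct and follows the paper's route. The paper's proof simply repeats the elementary matrix (corner and pigeonhole) calculation of Lemma~\ref{lem:direct-sum-bound} for the image ideal in $\End(\bigoplus_i\hcirc{Y_i})$, whose diagonal corners are the $B_E(Y_i)$, and you make exactly this explicit. Your induction on $m$ could be replaced by one pigeonhole count over all $m$ idempotents in the $\sum_i r_i+1$ slots, but that difference is cosmetic.
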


\begin{proof}
Repeat the proof of Lemma  \ref{lem:direct-sum-bound} in the birational
category.  
\end{proof}

\begin{theorem}
\label{thm:birational-retract}
Let $X$ be a smooth projective geometrically integral threefold over a
field of characteristic zero, and use integral coefficients.  Suppose that
$\hcirc X$ is a retract of
\[
 N^\circ=\bigoplus_{i=1}^m\hcirc{Y_i}
\]
for smooth projective integral $Y_i$.  If integers $r_i\geq1$, independent
of $E$, satisfy $B_E(Y_i)^{r_i}=0$ for all extensions $E/k$, then
\[
 \sum_{i=1}^m r_i+5
\]
is a uniform Rost exponent for $h(X)$.
\end{theorem}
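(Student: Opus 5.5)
The plan is to reduce to \cref{thm:ideal-RNZC-threefold}: it suffices to show that $B_E(X)^{r}=0$ with $r=\sum_i r_i$, for every extension $E/k$. The exponent $r+5$ is then independent of $E$ and is a uniform Rost exponent for $h(X)$.

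First fix $E/k$ and write $N=\bigoplus_i h(Y_i)$. By Lemma \ref{lem:birational-direct-sum}, the image $B'$ of $I_E(N)$ in $\End(N^\circ)$ satisfies $(B')^{r}=0$. The retraction data are $p:N^\circ\to\hcirc X$ and $i:\hcirc X\to N^\circ$ with $pi=\id$. These are morphisms in the idempotent completion of the quotient of effective Chow motives, and between motives of varieties they are given by Hom groups of the form in \cref{thm:KS}. So I lift them to honest correspondences $\tilde i:h(X)\to N$ and $\tilde p:N\to h(X)$ in $\Chow(k,\mathbb Z)$, using that the quotient functor is full on objects $h(-)$.

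Next take $\gamma_1,\ldots,\gamma_r\in J_E(X)$ and put $\delta_j=\tilde i\gamma_j\tilde p\in\End(N)$. Each $\delta_j$ vanishes over $E$, so $\delta_j\in I_E(N)$. Since $\tilde p\tilde i$ reduces to $\id$ on $\hcirc X$, we get
\[
 (\gamma_1\cdots\gamma_r)^\circ
 = p\,\delta_1^\circ\cdots\delta_r^\circ\, i,
\]
by inserting $pi=\id$ between consecutive factors: $\gamma_j^\circ\gamma_{j+1}^\circ=\gamma_j^\circ\,p\,i\,\gamma_{j+1}^\circ$, together with $\gamma^\circ=p\,i\,\gamma^\circ\, p\, i$. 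The middle product lies in $(B')^r=0$. Hence $B_E(X)^r=0$, and \cref{thm:ideal-RNZC-threefold} gives $I_E(h(X))^{r+5}=0$.

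I expect the main obstacle to be the bookkeeping in the idempotent completion. The retract $\hcirc X\to N^\circ\to\hcirc X$ is defined only modulo morphisms factoring through $P(1)$, and the components of $p$ and $i$ are classes in $\CH_0((Y_i)_{k(X)})$ and $\CH_0(X_{k(Y_i)})$. One must check that any lift is a genuine correspondence of degree zero between $h(X)$ and $h(Y_i)$; this is covered by \cref{thm:KS}. One must also check that the conjugation identity holds in $\End(\hcirc X)$ rather than in $\End(h(X))$. Only the birational images enter, so the failure of $\tilde p\tilde i=\id$ on the nose is harmless. The hypothesis that $X$ is geometrically integral is used only through \cref{thm:ideal-RNZC-threefold}.
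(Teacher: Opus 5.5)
Your proposal is correct and follows the paper's proof essentially step for step. You lift the retraction to correspondences $h(X)\to N\to h(X)$, conjugate the kernel elements into $I_E(N)$, and kill the $r$-fold product in $\End(N^\circ)$ via \cref{lem:birational-direct-sum}. You then recover $B_E(X)^r=0$ from $pi=\id_{\hcirc X}$ and conclude with \cref{thm:ideal-RNZC-threefold}.
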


\begin{proof}
Lift the retraction to $u:h(X)\to N$ and $v:N\to h(X)$; thus
$v^\circ u^\circ=\id_{\hcirc X}$.  Fix $E/k$ and take
$\alpha_1,\ldots,\alpha_r\in J_E(X)$, where $r=\sum_i r_i$.  The
endomorphisms $u\alpha_jv$ belong to $I_E(N)$, and Lemma 
\ref{lem:birational-direct-sum} gives
\[
 0=(u\alpha_1v)^\circ\cdots(u\alpha_rv)^\circ
   =u^\circ(\alpha_1\cdots\alpha_r)^\circ v^\circ.
\]
Multiplying on the left by $v^\circ$ and on the right by $u^\circ$ yields
$(\alpha_1\cdots\alpha_r)^\circ=0$.  Hence $B_E(X)^r=0$. 
Applying \cref{thm:ideal-RNZC-threefold} yields the final formula.
\end{proof}

\begin{corollary}
\label{cor:threefold-birational-retract}
Let $X$ be a smooth projective geometrically integral threefold over a field of
characteristic zero.  Suppose that, in $\Chowo(k,\mathbb Z)$, $\hcirc X$ is
a retract of $\bigoplus_{i=1}^m\hcirc{Y_i}$, where $h(Y_i)$ has uniform Rost
exponent $\rho_i$.  Then
\[
 \sum_{i=1}^m\rho_i+5
\]
is a uniform Rost exponent for $h(X)$.  In particular, if every
$\dim Y_i\leq2$, then $3m+5$ is a uniform exponent.  If all $Y_i$ are
geometrically rational surfaces, the bound is $2m+5$.  If the retract has
one term $Y_A$ as in Corollary \ref{cor:twisted-milnor}, the bound is $2n+4$.
\end{corollary}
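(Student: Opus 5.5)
The plan is to reduce everything to \cref{thm:birational-retract}: I will check that a uniform Rost exponent $\rho_i$ for $h(Y_i)$ gives $B_E(Y_i)^{\rho_i}=0$ for every $E/k$. The statement then follows with $r_i=\rho_i$, and the special cases come from plugging in the exponents already established.

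\textbf{Step 1.} If $I_E(h(Y_i))^{\rho_i}=0$, then every product of $\rho_i$ elements of $J_E(Y_i)$ vanishes already in $\End(h(Y_i))$. Hence its image in $\End(\hcirc{Y_i})$ vanishes. Since $B_E(Y_i)$ is the image of $J_E(Y_i)$ under a ring homomorphism, $B_E(Y_i)^{\rho_i}$ is spanned by images of such products, so $B_E(Y_i)^{\rho_i}=0$. This holds uniformly in $E$. Note that \cref{thm:birational-retract} only needs the $Y_i$ to be smooth projective integral, not geometrically integral, and Lemma~\ref{lem:birational-direct-sum} makes no such assumption either. Applying \cref{thm:birational-retract} with $r_i=\rho_i$ gives the exponent $\sum_i\rho_i+5$.

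\textbf{Step 2.} For $\dim Y_i\leq 2$, I would rather not use the ideal exponent $5$ of Corollary~\ref{cor:low-dimensional-sums}, since that would only give $5m+5$. The sharper input is the birational exponent itself. For a surface $S$, Corollary~\ref{cor:improved-surfaces}(a) gives $B_E(S)^3=0$, so $r_i=3$. If $\dim Y_i<2$, I would argue $B_E(Y_i)$ directly through Lemma~\ref{lem:birational-ideal-CH0}:
\begin{itemize}
\item For a point or a curve, one kernel element kills generic zero-cycles, by the same divisor and top-cycle Hochschild--Serre argument used for surfaces.
\item Alternatively, I would note that $\hcirc{Y_i}$ is a retract of $\hcirc{Y_i\times\mathbb P^{2-\dim Y_i}}$, since birational motives are birational invariants and $\hcirc{\mathbb P^1}=\mathbf 1$. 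Then I would transport the surface bound $3$ via the retraction argument in the proof of \cref{thm:birational-retract}.
\end{itemize}
Either route gives $r_i\le 3$, hence $3m+5$. For geometrically rational surfaces, Corollary~\ref{cor:improved-surfaces}(b) gives ideal zero-cycle exponent $2$, so $B_E\,{}^2=0$ and the bound is $2m+5$. For a single $Y_A$, Corollary~\ref{cor:twisted-milnor} gives $\rho=2n-1$, hence $2n+4$.

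\textbf{Main obstacle.} The subtle point is that ``uniform Rost exponent $\rho_i$'' alone yields $\sum\rho_i+5$, while the refined surface numbers $3$ and $2$ are the birational exponents, not the ideal ones. So I must invoke the $B_E$ bounds directly. In the geometrically rational case I also need them over arbitrary $E$. For that I would reuse the common-overfield argument from Corollary~\ref{cor:improved-surfaces}(b): it shows $J_E(S)$ lies in the geometric kernel, where the $\CH_0$ exponent $2$ holds, and Lemma~\ref{lem:birational-ideal-CH0} then converts this into $B_E(S)^2=0$.
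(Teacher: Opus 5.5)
Your proposal is correct and follows the paper's own proof. The paper likewise notes that $I_E(h(Y_i))^{\rho_i}=0$ forces $B_E(Y_i)^{\rho_i}=0$. It then feeds the sharper birational exponents $3$ and $2$ from \cref{cor:improved-surfaces}, together with zero birational kernel for curves and points, into \cref{thm:birational-retract}, and uses $\rho=2n-1$ from \cref{cor:twisted-milnor} for the last case. Your alternative route for curves and points also works, but the isomorphism $\hcirc{Y\times\mathbb P^r}\simeq\hcirc{Y}$ comes from the projective-bundle formula, not from birational invariance.
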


\begin{proof}
The image ideal $B_E(Y_i)$ has exponent at most $\rho_i$.  For a surface in
characteristic zero it has exponent $3$, and for a geometrically rational
surface exponent $2$, by Corollary \ref{cor:improved-surfaces}.  Curves and points
have zero birational kernel, so the displayed coarse bounds still apply.
Now use \cref{thm:birational-retract}; the twisted-Milnor assertion uses Corollary 
\ref{cor:twisted-milnor}.
\end{proof}

\begin{corollary}
\label{cor:pure-birational-threefolds}
Let $X$ and $Y$ be smooth projective geometrically integral threefolds over a field of
characteristic zero.  If $\hcirc X\simeq\hcirc Y$ in
$\Chowo(k,\mathbb Z)$, then $h(X)$ has uniform strong Rost nilpotence if and
only if $h(Y)$ does.  Consequently, uniform strong Rost nilpotence is a
stable birational invariant of smooth projective geometrically integral
threefolds; in
particular, every stably rational such threefold has uniform Rost exponent
at most $6$.  More generally, if $\rho$ is a uniform exponent for $h(Y)$,
then $\rho+5$ is one for $h(X)$.
\end{corollary}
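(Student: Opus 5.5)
The plan is to reduce everything to \cref{thm:birational-retract}, read with a single summand $m=1$ and $Y_1=Y$. First I would prove the general quantitative claim. Suppose $\hcirc X\simeq\hcirc Y$ and $\rho$ is a uniform exponent for $h(Y)$. Then $I_E(h(Y))^\rho=0$ for every $E/k$. The image ideal $B_E(Y)$ is the image of $I_E(h(Y))$ under a ring homomorphism, so $B_E(Y)^\rho=0$ as well; note that $\rho$ does not depend on $E$. An isomorphism is in particular a retract. Hence \cref{thm:birational-retract} gives the uniform exponent $\rho+5$ for $h(X)$. The argument is symmetric in $X$ and $Y$, so uniform strong Rost nilpotence holds for $h(X)$ if and only if it holds for $h(Y)$.

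Next comes stable birational invariance. If $X\times\mathbb P^a$ is birational to $Y\times\mathbb P^b$, the point is to compare $\hcirc X$ with $\hcirc Y$ in $\Chowo(k,\mathbb Z)$. I would use that $\hcirc{}$ is a birational invariant: by \cref{thm:KS}, the Hom groups depend only on the function fields, via $\CH_0(Y_{k(X)})$. I would also use the isomorphism $\hcirc{X\times\mathbb P^a}\simeq\hcirc X$, which holds because the projective bundle formula writes $h(X\times\mathbb P^a)=\bigoplus_i h(X)(i)$ and the summands with $i\ge1$ die in the quotient by $P(1)$. Together these give $\hcirc X\simeq\hcirc{X\times\mathbb P^a}\simeq\hcirc{Y\times\mathbb P^b}\simeq\hcirc Y$. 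The two varieties $X$ and $Y$ are threefolds, so the first paragraph applies. The higher-dimensional varieties $X\times\mathbb P^a$ appear only as intermediaries inside $\Chowo$. I expect this to be the main obstacle: one has to make sure the birational invariance of $\hcirc{}$ is cited correctly in arbitrary dimension, for instance from \cite{KahnSujatha}, and not only for threefolds.

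Finally, the stably rational case. Take $Y=\mathbb P^3$. Its motive is split Tate, so \cref{lem:artin-tate-faithful} gives uniform exponent $\rho=1$. The general bound then yields $6$.
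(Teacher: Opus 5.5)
Your proposal is correct and is essentially the paper's proof. You apply \cref{thm:birational-retract} with a single summand to the isomorphism $\hcirc X\simeq\hcirc Y$ in each direction, using that a uniform exponent $\rho$ for $h(Y)$ bounds $B_E(Y)$, and you get stable birational invariance from the Kahn--Sujatha facts that birational maps become invertible in $\Chowo(k,\mathbb Z)$ and that $\hcirc{X\times\mathbb P^r}\simeq\hcirc X$; the only cosmetic difference is that the paper compares a stably rational threefold with a point, whose birational kernel is zero, rather than with $\mathbb P^3$, which equally gives $r=1$ and the exponent $6$ (cite the invertibility of birational maps directly, as the paper does, rather than inferring it from the shape of the Hom groups in \cref{thm:KS}).
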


\begin{proof}
Apply \cref{thm:birational-retract} to the isomorphism in each direction.
For a point the birational kernel is zero, for which we use exponent $1$.
Stable birational smooth
projective varieties have isomorphic pure birational motives: birational
maps become invertible, while the projective-bundle formula shows that
$\hcirc{(X\times\mathbb P^r)}\simeq\hcirc X$; see
\cite[Proposition~2.3.8 and Corollary~2.4.3]{KahnSujatha}.
\end{proof}

We next give a weaker and more intrinsic geometric condition which produces
the required motivic factorisation directly.  For a possibly singular
projective scheme $V$, the group $\CH_0(V)$ is understood in the sense of
Fulton.

\begin{theorem}
\label{thm:surface-supported-generic-point}
Let $X$ be a smooth projective geometrically integral threefold over a field of
characteristic zero.  Suppose that there is a closed subset $V\subset X$ of
dimension at most two such that
\[
 [\eta_X]\in
 \operatorname{im}\bigl(\CH_0(V_{k(X)})\longrightarrow
 \CH_0(X_{k(X)})\bigr).
\]
Equivalently, suppose that there are cycles $\Gamma_D,\Gamma_V$ such that
\[
 \Delta_X=\Gamma_D+\Gamma_V,
\]
where $\Gamma_D$ is supported on $D\times X$ for a proper closed subset
$D\subset X$, and $\Gamma_V$ is supported on $X\times V$.  Then $h(X)$ has
uniform strong Rost nilpotence with integral coefficients.  If the support
factorisation of $\Gamma_V$ uses $r$ smooth projective integral summands of
dimension at most two, then $3r+5$ is a uniform exponent.  If all these
summands are geometrically rational, then $2r+5$ is a uniform exponent.
\end{theorem}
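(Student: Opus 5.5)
The plan is to reduce to \cref{thm:birational-retract} by showing that $\hcirc X$ is a retract of a finite sum of birational motives of surfaces.

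First, the equivalence of the two formulations. By \cref{thm:KS}, the diagonal restricted to the generic point of the first factor is $[\eta_X]\in\CH_0(X_{k(X)})$. A class in $\CH_0(V_{k(X)})$ is the restriction of a cycle on $X\times V$; call its pushforward to $X\times X$ $\Gamma_V$. Then $\Delta_X-\Gamma_V$ vanishes at the generic point, so by the kernel description in \cref{thm:KS} it is supported on $D\times X$ for some proper closed $D$. The converse follows by restricting the decomposition to $k(X)\times X$.

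Next, apply \cref{lem:support-factorisation}(b) to $\Gamma_V$, which is supported on $X\times V$. This factors $\Gamma_V=vu$ with
\[
 u:h(X)\to N=\bigoplus_{j=1}^r h(T_j),\qquad v:N\to h(X),
\]
where each $T_j$ is smooth projective integral with $\dim T_j\leq2$. In $\Chowo(k,\mathbb Z)$ the class $\Gamma_D$ dies: by \cref{lem:support-factorisation}(a) it factors through the objects $h(T)(3-\dim T)$ with positive twist, and these are zero in the birational category. Hence $\id_{\hcirc X}=v^\circ u^\circ$, so $\hcirc X$ is a retract of $\bigoplus_j\hcirc{T_j}$.

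Now invoke \cref{cor:threefold-birational-retract}, or directly \cref{thm:birational-retract}, with $Y_j=T_j$. By \cref{cor:improved-surfaces}, $B_E(T_j)^3=0$ for surfaces in characteristic zero, and $B_E(T_j)^2=0$ if $T_j$ is geometrically rational. For curves and points the birational kernel vanishes, since $\CH_0$ of a curve over its function field injects after base change, or one can simply use $T_j\times\mathbb P^{2-\dim T_j}$. This gives the exponents $3r+5$ and $2r+5$, and uniform strong Rost nilpotence follows.

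The main obstacle is a hypothesis mismatch. \cref{thm:birational-retract} is stated for smooth projective integral $Y_i$, which $T_j$ satisfy, but \cref{cor:improved-surfaces}(a) needs only integrality, so this is fine. In the rational case, however, the "geometrically rational" hypothesis must apply to the resolved $T_j$ themselves, and the statement's phrasing "if all these summands are geometrically rational" assumes exactly this. I would also double-check that the coarse bound $3$ for curves is legitimate: by \cref{lem:birational-ideal-CH0}, a birational kernel exponent of $1$ holds for curves whenever the base change on $\CH_0$ of curves at generic points is handled, and otherwise I would pass to $T\times\mathbb P^1$, whose surface bound $3$ applies.
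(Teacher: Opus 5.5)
Your proposal is correct and follows the paper's proof step for step. You lift $[\eta_X]$ and spread out to get $\Delta_X=\Gamma_D+\Gamma_V$, kill $\Gamma_D$ in $\Chowo(k,\mathbb Z)$, and factor $\Gamma_V$ through $\bigoplus_j h(T_j)$ by \cref{lem:support-factorisation}(b), so that $\hcirc X$ is a retract of $\bigoplus_j\hcirc{T_j}$. You then conclude with \cref{thm:birational-retract}, using the birational exponents $3$ and $2$ from \cref{cor:improved-surfaces}. Your extra justifications only make explicit what the paper leaves implicit: that $\Gamma_D^\circ=0$ because $\Gamma_D$ factors through positive Tate twists, and that curves and points have zero birational kernel via injectivity of $\CH_0$ base change for curves over a common overfield.
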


\begin{proof}
The generic point condition lifts $[\eta_X]$ to a zero-cycle on
$V_{k(X)}$.  Spreading this cycle over a nonempty open subset of $X$ and
taking its closure gives a correspondence $\Gamma_V$ supported on
$X\times V$ whose restriction to the generic point of the first factor is
$[\eta_X]$.  By localisation,
\[
 \Delta_X=\Gamma_D+\Gamma_V,
\]
where $\Gamma_D$ is supported on $D\times X$ for a proper closed subset
$D\subset X$.  Conversely, restricting such a decomposition to the generic
point gives the stated condition; this is the classical Bloch--Srinivas
generic point argument \cite{BlochSrinivas}.

In the pure birational category $\Gamma_D$ vanishes.  By the second part of Lemma 
\ref{lem:support-factorisation}, $\Gamma_V$ factors through
$N=\bigoplus_{j=1}^r h(T_j)$ with $\dim T_j\leq2$.  Thus $\hcirc X$ is a
retract of $\bigoplus_j\hcirc{T_j}$.  The birational kernel exponents are at
most $3$ for arbitrary surfaces in characteristic zero and $2$ for
geometrically rational surfaces, by Corollary \ref{cor:improved-surfaces}.  Finally, it remains to apply the 
\cref{thm:birational-retract}.
\end{proof}

\begin{corollary}\label{cor:universal-support}
Let $X$ be as in \cref{thm:surface-supported-generic-point}.  If there is a
closed subset $V\subset X$ of dimension at most two such that
\[
 \CH_0(V_F)\longrightarrow\CH_0(X_F)
\]
is surjective for every extension $F/k$, then $h(X)$ has uniform strong Rost
nilpotence with integral coefficients.
\end{corollary}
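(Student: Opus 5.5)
The plan is to reduce Corollary~\ref{cor:universal-support} directly to \cref{thm:surface-supported-generic-point}: I will show that the universal surjectivity hypothesis implies the generic point condition, and then invoke the theorem.

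\emph{Step 1.} Take $F=k(X)$ in the hypothesis. The generic point $\eta_X$ of $X$ defines a $k(X)$-rational point of $X_{k(X)}$, so its class $[\eta_X]\in\CH_0(X_{k(X)})$ is defined. Surjectivity of $\CH_0(V_{k(X)})\to\CH_0(X_{k(X)})$ gives
\[
 [\eta_X]\in\operatorname{im}\bigl(\CH_0(V_{k(X)})\longrightarrow\CH_0(X_{k(X)})\bigr),
\]
with $\dim V\leq 2$.

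\emph{Step 2.} This is exactly the hypothesis of \cref{thm:surface-supported-generic-point}. Its proof spreads out the lifted zero-cycle and uses localisation to get a decomposition $\Delta_X=\Gamma_D+\Gamma_V$. It then applies Lemma~\ref{lem:support-factorisation}(b) to factor $\Gamma_V$ through a finite sum $\bigoplus_{j=1}^r h(T_j)$ with $\dim T_j\leq 2$, and concludes via \cref{thm:birational-retract} and Corollary~\ref{cor:improved-surfaces}. The number $r$ of summands is finite, so it yields the explicit uniform exponent $3r+5$. Hence $h(X)$ has uniform strong Rost nilpotence with integral coefficients.

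There is no real obstacle. Only the single field $F=k(X)$ is used, so the hypothesis for all $F$ is much stronger than needed. The one point to check is that $\CH_0(V_F)$ in the sense of Fulton, for possibly singular $V$, is the group appearing in \cref{thm:surface-supported-generic-point}. It is, by the convention fixed just before that theorem. If $V$ is not equidimensional or is reducible, nothing changes, since Lemma~\ref{lem:support-factorisation} handles arbitrary closed subsets $Z$.
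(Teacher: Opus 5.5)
Your proposal is correct and matches the paper's proof, which likewise specialises to $F=k(X)$ to obtain $[\eta_X]\in\operatorname{im}\bigl(\CH_0(V_{k(X)})\to\CH_0(X_{k(X)})\bigr)$ and then applies \cref{thm:surface-supported-generic-point}. Your extra remarks on the exponent $3r+5$ and on Fulton's $\CH_0$ for singular $V$ are consistent with that theorem but not needed.
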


\begin{proof}
Take $F=k(X)$ and apply
\cref{thm:surface-supported-generic-point}.
\end{proof}

\begin{remark}
Diaz proved ordinary integral Rost nilpotence from the universal support
hypothesis in dimension three \cite[Corollary~2.10]{Diaz2019}, with the
correction in \cite{Diaz2022}.  The uniform strong conclusion in Corollary 
\ref{cor:universal-support} uses the surface ideal bounds and the explicit
direct-sum estimate; \cref{thm:surface-supported-generic-point} requires only
the single generic-point condition actually used in the argument.
\end{remark}

\section{Motivic operations and birationality}

\begin{proposition}\label{prop:projective-bundle-bound}
Let $X$ be a smooth projective $k$-scheme, let $\mathcal E$ be a vector
bundle of rank $r+1$ on $X$, and let $a$ be a uniform Rost exponent for
$h(X)$.  Then $a(r+1)$ is a uniform Rost exponent for
$h(\mathbb P_X(\mathcal E))$.  Uniform strong Rost nilpotence holds for
$\mathbb P_X(\mathcal E)$ if and only if it holds for $X$.
\end{proposition}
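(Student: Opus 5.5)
The plan is to combine the projective bundle formula with \cref{lem:direct-sum-bound} and part (a) of \cref{cor:low-dimensional-sums}. By the projective bundle formula there is an isomorphism of Chow motives over $k$,
\[
 h(\mathbb P_X(\mathcal E))\simeq\bigoplus_{i=0}^{r}h(X)(i).
\]
It is given by explicit correspondences built from powers of $c_1(\mathcal O(1))$ and the graph of the projection. These correspondences are defined over $k$ and commute with base change to any $E/k$. Hence the isomorphism identifies $\End_k(h(\mathbb P_X(\mathcal E)))$ with $\End_k(\bigoplus_i h(X)(i))$ compatibly with the base-change maps to the corresponding rings over $E$. In particular it identifies the ideals $I_E$ on both sides for every $E$.

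Since Tate twists do not change endomorphism rings, or base-change kernels, each summand $h(X)(i)$ has uniform exponent $a$. \cref{lem:direct-sum-bound}, applied for each fixed $E$ with $m=r+1$ and $s_i=a$, gives $I_E(h(\mathbb P_X(\mathcal E)))^{a(r+1)}=0$. This bound is independent of $E$, which proves the first assertion.

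For the equivalence, the direction from $X$ to $\mathbb P_X(\mathcal E)$ is what was just shown. Conversely, $h(X)=h(X)(0)$ is a direct summand of $h(\mathbb P_X(\mathcal E))$ via the same compatible isomorphism. \cref{cor:low-dimensional-sums}(a) then shows that any uniform exponent for $h(\mathbb P_X(\mathcal E))$ is also one for $h(X)$.

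The only point requiring care is the compatibility of the projective bundle isomorphism with base change. This holds because $\mathbb P_X(\mathcal E)_E=\mathbb P_{X_E}(\mathcal E_E)$ and the defining cycles pull back to the corresponding cycles over $E$. If $X$ is disconnected, one argues componentwise, or notes that the formula holds additively. No other obstacle is expected.
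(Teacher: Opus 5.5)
Your proposal is correct and follows the paper's proof: the projective-bundle decomposition $h(\mathbb P_X(\mathcal E))\simeq\bigoplus_{i=0}^r h(X)(i)$, then \cref{lem:direct-sum-bound} with $m=r+1$ and $s_i=a$, and the direct-summand (corner) argument for the converse. Your extra care about base-change compatibility is harmless but not needed. Base change is a functor on Chow motives, so any isomorphism over $k$ becomes an isomorphism over $E$ and identifies the kernels $I_E$; the explicit description of the correspondences plays no role.
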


\begin{proof}
The motivic projective-bundle formula gives
\[
 h(\mathbb P_X(\mathcal E))\simeq\bigoplus_{i=0}^r h(X)(i).
\]
All that remains is to apply Lemma \ref{lem:direct-sum-bound}.  Conversely, $h(X)$ is a direct summand
of the displayed motive.
\end{proof}

\begin{corollary}
\label{cor:Gille-projective-bundles}
Let $S$ be a smooth projective geometrically integral surface, use integral
coefficients, and let $\mathcal E$ have rank $r+1$.
\begin{enumerate}[label=\textup{(\alph*)}]
\item If $\operatorname{char}k=0$, then $5(r+1)$ is a uniform exponent for
      $h(\mathbb P_S(\mathcal E))$.
\item If $S$ is geometrically rational, over a field of arbitrary
      characteristic, then $4(r+1)$ is a uniform exponent.
\item If $r=1$ and $\operatorname{char}k=0$, the sharper exponents are $8$
      for arbitrary $S$ and $7$ for geometrically rational $S$.
\end{enumerate}
\end{corollary}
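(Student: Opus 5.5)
Parts (a) and (b) follow from Proposition \ref{prop:projective-bundle-bound} once we have uniform exponents for $h(S)$. By Corollary \ref{cor:improved-surfaces}(a), $5$ is a uniform exponent for $h(S)$ when $\operatorname{char}k=0$. By Corollary \ref{cor:improved-surfaces}(b), $4$ is a uniform exponent when $S$ is geometrically rational, in any characteristic. The projective-bundle decomposition
\[
 h(\mathbb P_S(\mathcal E))\simeq\bigoplus_{i=0}^r h(S)(i)
\]
together with Lemma \ref{lem:direct-sum-bound} then gives $5(r+1)$ and $4(r+1)$. Tate twists do not change endomorphism rings, so each summand carries the exponent of $h(S)$.

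For (c), with $r=1$, the naive bound $5\cdot 2=10$ must be improved. The variety $P=\mathbb P_S(\mathcal E)$ is a smooth projective geometrically integral threefold in characteristic zero, and it is birational to $S\times\mathbb P^1$, hence stably birational to $S$. So I will run the birational route. By Kahn--Sujatha (as in the proof of Corollary \ref{cor:pure-birational-threefolds}), $\hcirc P\simeq\hcirc{(S\times\mathbb P^1)}\simeq\hcirc S$. Theorem \ref{thm:birational-retract} applied with $m=1$ and $Y_1=S$ gives the exponent $r_1+5$, where $r_1$ is the birational kernel exponent of $S$. Corollary \ref{cor:improved-surfaces} gives $B_E(S)^3=0$ for arbitrary $S$ and $B_E(S)^2=0$ for geometrically rational $S$. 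This yields $8$ and $7$, uniformly in $E$.

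The main obstacle is checking the hypotheses of Theorem \ref{thm:birational-retract}. The retraction has to be in $\Chowo(k,\mathbb Z)$ with integral coefficients, which Kahn--Sujatha provides. For the geometrically rational case, the bound $B_E(S)^2=0$ must hold for every $E$ and not only for the geometric kernel. Corollary \ref{cor:improved-surfaces}(b) supplies this through the common-overfield argument placing $J_E(S)$ inside the geometric kernel, and Lemma \ref{lem:birational-ideal-CH0} converts the zero-cycle statement into $B_E(S)^2=0$. Since the birational-kernel exponent of $S$ feeds directly into $r+5$, no further lifting through $h(S)(1)$ is needed.
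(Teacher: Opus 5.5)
Your proposal is correct and follows essentially the paper's proof: (a) and (b) come from Corollary~\ref{cor:improved-surfaces} together with the projective-bundle decomposition and Lemma~\ref{lem:direct-sum-bound}, and (c) comes from $\hcirc{\mathbb P_S(\mathcal E)}\simeq\hcirc S$ fed into Theorem~\ref{thm:birational-retract} with birational kernel exponent $3$, or $2$ in the geometrically rational case. The only cosmetic difference is how you get that isomorphism: the paper reads it off directly from $h(\mathbb P_S(\mathcal E))\simeq h(S)\oplus h(S)(1)$, since the twisted summand vanishes in $\Chowo(k,\mathbb Z)$, whereas you pass through birationality with $S\times\mathbb P^1$.
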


\begin{proof}
Parts (a) and (b) follow from Corollary \ref{cor:improved-surfaces} and the
projective-bundle formula.  If $r=1$, that formula gives
$\hcirc{\mathbb P_S(\mathcal E)}\simeq\hcirc S$.  Apply
\cref{thm:birational-retract} with birational kernel exponent $3$, or $2$
in the geometrically rational case.
\end{proof}

\begin{remark}
This strengthens \cite[Theorem~3.1]{Gille2018}: the conclusion is uniform
strong rather than elementwise, the vector bundle may have arbitrary rank,
and the numerical bounds for ranks two and three improve from
$18,27$ to $8,15$ in characteristic zero and from $12,18$ to $7,12$ for
geometrically rational surfaces in characteristic zero.  In positive
characteristic part~(b) gives the uniform bounds $8,12$.
\end{remark}

\begin{proposition}\label{prop:blow-up-bound}
Let $X$ be a smooth projective $k$-scheme, let $Z\subset X$ be a smooth
closed subscheme of pure codimension $c\geq2$, and put
$\widetilde X=\operatorname{Bl}_Z X$.
\begin{enumerate}[label=\textup{(\alph*)}]
\item If $a$ and $b$ are uniform Rost exponents for $h(X)$ and $h(Z)$,
      respectively, then
      \[
       a+b(c-1)
      \]
      is a uniform Rost exponent for $h(\widetilde X)$.
\item Uniform strong Rost nilpotence holds for $\widetilde X$ if and only if
      it holds for both $X$ and $Z$.
\end{enumerate}
\end{proposition}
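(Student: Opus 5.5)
The proof will follow the pattern of Proposition~\ref{prop:projective-bundle-bound}, with the blow-up formula in place of the projective-bundle formula. Manin's blow-up formula for Chow motives gives an isomorphism
\[
 h(\widetilde X)\simeq h(X)\oplus\bigoplus_{i=1}^{c-1}h(Z)(i),
\]
and this isomorphism is induced by correspondences defined over $k$, namely the graph of the blow-down and the correspondences through the exceptional divisor $E=\mathbb P(N_{Z/X})$. Since these correspondences are compatible with every base change $E'/k$, the decomposition is stable under extension of scalars. Transporting endomorphisms along it therefore identifies $I_{E'}(h(\widetilde X))$ with $I_{E'}$ of the right-hand side, for every $E'$. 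If $Z$ is disconnected of varying codimension, one can work componentwise; the statement assumes pure codimension $c$, so the formula holds as displayed, and for possibly disconnected $Z$ one still treats $h(Z)$ additively as in the paper's conventions.

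For part (a), take the summands $N_0=h(X)$ and $N_i=h(Z)(i)$ for $1\le i\le c-1$. Tate twists do not change endomorphism rings or base-change kernels, so $I_{E'}(N_0)^a=0$ and $I_{E'}(N_i)^b=0$ for all $E'$. Lemma~\ref{lem:direct-sum-bound} then gives
\[
 I_{E'}(h(\widetilde X))^{a+(c-1)b}=0,
\]
and the bound is independent of $E'$.

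For part (b), the ``if'' direction is immediate from (a). For the converse, both $h(X)$ and $h(Z)(1)$ are direct summands of $h(\widetilde X)$, because $c\ge2$ guarantees that at least one twisted copy of $h(Z)$ occurs. Corollary~\ref{cor:low-dimensional-sums}(a) shows that a uniform exponent passes to direct summands, and Tate twists are harmless, so $X$ and $Z$ inherit uniform strong Rost nilpotence.

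The only step needing care is the first: checking that the blow-up isomorphism commutes with base change, so that the kernels correspond for every extension $E'$. I would handle this by writing the mutually inverse isomorphisms explicitly as correspondences over $k$ (as in Manin's construction) and noting that base change is a functor on correspondences that preserves composition. With that in place, the rest is formal.
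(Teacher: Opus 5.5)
Your proposal is correct and follows the paper's own proof. You use the blow-up formula $h(\widetilde X)\simeq h(X)\oplus\bigoplus_{j=1}^{c-1}h(Z)(j)$ and Lemma~\ref{lem:direct-sum-bound} for (a), and for the converse in (b) you pass to the summands $h(X)$ and $h(Z)(1)$. The step you flag as needing care is in fact automatic: base change is a functor, so any isomorphism in $\Chow(k,R)$ base changes to an isomorphism over every extension, and conjugation identifies the base-change kernels without writing Manin's correspondences explicitly.
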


\begin{proof}
The motivic blow-up formula gives
\[
 h(\widetilde X)\simeq h(X)\oplus
 \bigoplus_{j=1}^{c-1}h(Z)(j);
\]
see Manin \cite{Manin} or Fulton \cite[Section~6.7]{Fulton}.  Part (a)
follows from Lemma \ref{lem:direct-sum-bound} again.  For (b), the forward implication
uses the fact that $h(X)$ and, since $c\geq2$, $h(Z)(1)$ are direct summands
of $h(\widetilde X)$; the reverse implication follows from (a), since Tate
twists do not change base-change kernels in endomorphism rings.
\end{proof}

\begin{remark}
The qualitative equivalence in Proposition \ref{prop:blow-up-bound} is
\cite[Corollary~3.5]{Gille2026}; the explicit exponent in part (a) is the
quantitative refinement used here.
\end{remark}

\begin{corollary}
\label{cor:threefold-blow-up}
Let $X$ be a smooth projective threefold over a field of characteristic
zero and let $\widetilde X$ be the blow-up along a smooth centre of pure
codimension at least two.  If $a$ is a uniform integral Rost exponent for
$h(X)$, then $a+1$ is one for $h(\widetilde X)$.
\end{corollary}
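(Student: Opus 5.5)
The plan is to apply Proposition \ref{prop:blow-up-bound}(a), so the whole task is to bound $b(c-1)$ by $1$. Since $X$ is a threefold and the centre $Z$ has pure codimension $c\geq2$, we have $\dim Z=3-c\in\{0,1\}$. Indeed, $c=2$ means $Z$ is a (possibly disconnected) smooth projective curve, and $c=3$ means $Z$ is a finite set of closed points, a smooth zero-dimensional scheme.

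First I treat the case $c=3$. Here $Z$ is smooth projective and zero-dimensional, so $h(Z)$ is an Artin motive. Lemma \ref{lem:artin-tate-faithful} then gives $b=1$, and Proposition \ref{prop:blow-up-bound}(a) yields the exponent $a+b(c-1)=a+2$.

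This is weaker than the claimed $a+1$, so this case is the main obstacle. To fix it I would not apply Lemma \ref{lem:direct-sum-bound} blindly. Instead I refine the direct-sum estimate. The blow-up formula reads
\[
 h(\widetilde X)\simeq h(X)\oplus\bigoplus_{j=1}^{c-1}h(Z)(j).
\]
The summands $h(Z)(j)$ with distinct $j$ form an Artin--Tate motive $A=\bigoplus_j h(Z)(j)$, and Lemma \ref{lem:artin-tate-faithful} says that $A$ \emph{as a whole} has uniform exponent $1$. The direct-sum lemma applied to the two summands $h(X)$ and $A$ then gives $a+1$.

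Next the case $c=2$. Now $Z$ is a smooth projective curve and $c-1=1$, so only a single summand $h(Z)(1)$ appears. I need $b=1$, that is, base change must be injective on $\End(h(Z))$ for each component curve $C$ of $Z$. This fails over a general field. However, the endomorphism ring of $h(Z)(1)$ only enters through the off-diagonal and diagonal blocks of the matrix calculation in Lemma \ref{lem:direct-sum-bound}, so I would need some control of it.

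My first attempt is the known fact that for curves (dimension $\leq1$) the Rost kernel has exponent $1$ for the relevant blocks: Chow groups of $C\times C$ in codimension $1$ with a base-change kernel. Via Rosenschon--Sawant/Vishik-type arguments, the kernel on $\CH^1(C\times C)$ under field extension is torsion and nilpotent of small order. If only $b\leq2$ is obtainable here, I would use the refined splitting in the same spirit as the $c=3$ case: decompose $h(C)=h^0\oplus h^1\oplus h^2$ over a point-split situation, where $h^0$ and $h^2$ are Artin--Tate.

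Concretely, I expect the intended argument to read ``a curve has uniform exponent $1$ after grouping'', relying on the fact that $\CH^1(C\times C)\to\CH^1(C_E\times C_E)$ is injective modulo classes killed in degree. If that injectivity fails, the statement of the corollary needs the sharper block analysis above, and this is where I would focus the effort.
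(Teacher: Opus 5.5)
Your treatment of $c=3$ is correct and matches the paper. The whole defect motive $h(Z)(1)\oplus h(Z)(2)$ is Artin--Tate, so Lemma \ref{lem:artin-tate-faithful} gives it zero base-change kernel. Lemma \ref{lem:direct-sum-bound}, applied to the two blocks $h(X)$ and this defect motive, then gives $a+1$.

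The case $c=2$, however, is not proved, and this is a genuine gap. You claim that base change on $\End(h(Z))$ fails to be injective over a general field. You then only sketch substitutes that cannot reach the stated bound: an exponent $b=2$ for the curve yields only $a+2$. A splitting $h^0\oplus h^1\oplus h^2$ needs a zero-cycle of degree one, and even then leaves the $h^1$ block uncontrolled.

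The missing idea is that injectivity does hold; this is exactly the ``Hochschild--Serre argument'' the paper invokes. Write $Z=\coprod_i C_i$. Then
$\End(h(Z))=\CH^1(Z\times Z)=\bigoplus_{i,j}\operatorname{Pic}(C_i\times C_j)$,
and $\operatorname{Pic}(W)\to\operatorname{Pic}(W_E)$ is injective for every smooth projective $W$ and every $E/k$:
\begin{enumerate}
\item A trivialisation over $W_E$ is defined over a finitely generated subextension and spreads out over an open subset of a model.
\item Specialising at a closed point reduces to a finite extension, hence (in characteristic $0$) to a finite Galois extension $L'/k$.
\item Hochschild--Serre identifies $\Ker(\operatorname{Pic}(W)\to\operatorname{Pic}(W_{L'}))$ with $H^1(\operatorname{Gal}(L'/k),H^0(W_{L'},\mathcal O^\times))$.
\item This group vanishes by Hilbert~90 and Shapiro's lemma, because $H^0(W,\mathcal O_W)$ is a finite \'etale $k$-algebra.
\end{enumerate}
Hence $I_E(h(Z)(1))=0$, and the same two-block application of Lemma \ref{lem:direct-sum-bound} gives $a+1$.

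You need this injectivity on all of $\End(h(Z))$, including the off-diagonal blocks $\CH^1(C_i\times C_j)$ (which are again Picard groups, so the argument covers them). Working component by component and adding exponents would give a bound that grows with the number of components of $Z$, not $a+1$.
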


\begin{proof}
The centre is a curve or a zero-dimensional scheme.  In the first case
base change on the endomorphism ring of its motive is injective by the Hochschild--Serre argument; in the second case its Tate-twisted
''defect'' motive is Artin--Tate.  Thus the whole ''defect'' motive in the blow-up
formula has zero base-change kernel and exponent $1$.  Apply Lemma
\ref{lem:direct-sum-bound} again to $h(X)$ and the ''defect'' motive.
\end{proof}

\begin{remark}
Consequently, a weak factorisation of length $s$ between smooth projective
threefolds transports a uniform exponent $a$ to one at most $a+s$.  This is
the uniform ideal-level quantitative form of
\cite[Theorem~2.4]{Gille2018}.
\end{remark}

\begin{corollary}\label{cor:birational-four}
Over a field of characteristic zero, uniform strong Rost nilpotence with
integral coefficients is a birational invariant of smooth projective
integral varieties of dimension at most four.  Consequently, every smooth
projective rational variety of dimension at most four has uniform strong
Rost nilpotence.
\end{corollary}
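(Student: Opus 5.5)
The plan is to use weak factorisation (Abramovich--Karu--Matsuki--Włodarczyk) in characteristic zero. Any birational map between smooth projective varieties $X$ and $Y$ factors as a finite chain of blow-ups and blow-downs along smooth centres. All intermediate varieties in the chain are smooth projective varieties of the same dimension $n\le4$. It therefore suffices to show that uniform strong Rost nilpotence passes in both directions along a single blow-up $\widetilde X=\operatorname{Bl}_Z X\to X$ with smooth centre $Z$ of pure codimension $c\geq2$. (If $Z$ is not of pure codimension, decompose it into connected components and blow them up one at a time.)

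By Proposition \ref{prop:blow-up-bound}(b), uniform strong Rost nilpotence holds for $\widetilde X$ if and only if it holds for both $X$ and $Z$. The key step is that every such centre $Z$ automatically has uniform strong Rost nilpotence, because $\dim Z\le n-2\le 2$.
\begin{enumerate}[label=\textup{(\roman*)}]
\item If $\dim Z=0$, then $h(Z)$ is Artin--Tate, and Lemma \ref{lem:artin-tate-faithful} gives exponent $1$.
\item If $\dim Z\le 2$, each connected component of $Z$ is a smooth projective integral variety of dimension at most two. Corollary \ref{cor:low-dimensional-sums}(b) then gives a uniform exponent $5m$, where $m$ is the number of components.
\end{enumerate}
Hence, for each step of the factorisation, $\widetilde X$ satisfies the property if and only if $X$ does. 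Propagating this along the chain gives birational invariance.

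Two points need care. First, the paper treats smooth projective schemes with finitely many components additively, so a disconnected centre is handled by the direct sum bound, Lemma \ref{lem:direct-sum-bound}. Second, Corollary \ref{cor:low-dimensional-sums}(b) is stated for integral $T_i$ without any geometric integrality hypothesis, which is what is needed here, since components of a centre need not be geometrically integral. The intermediate varieties of the weak factorisation are integral because they are birational to $X$.

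For the final assertion, apply the invariance to a rational variety $X$ birational to $\mathbb P^n$. The motive $h(\mathbb P^n)=\bigoplus_{i=0}^n\mathbb Z(i)$ is Artin--Tate, so Lemma \ref{lem:artin-tate-faithful} shows that it has exponent $1$, and the property transfers to $X$.

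The main obstacle is ensuring that weak factorisation is available in the needed generality. It requires characteristic zero, and it is cleanest for varieties over $k$ that are not assumed geometrically integral. The theorem of Abramovich--Karu--Matsuki--Włodarczyk holds over any field of characteristic zero for proper birational maps between smooth complete varieties, with the intermediate varieties projective when the ends are. I would cite it in that form.

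If I also want an explicit bound, I would count exponents along the chain. Going up a blow-up increases the exponent as in Proposition \ref{prop:blow-up-bound}(a). Going down uses passage to the direct summand $h(X)\subset h(\widetilde X)$, Corollary \ref{cor:low-dimensional-sums}(a), which does not increase the exponent.
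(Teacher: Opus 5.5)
Your proposal is correct and follows the paper's proof essentially step for step. Weak factorisation reduces to single blow-ups along smooth centres of dimension at most two; these centres have a finite uniform exponent by Corollary~\ref{cor:low-dimensional-sums}(b) and Lemma~\ref{lem:direct-sum-bound}, so Proposition~\ref{prop:blow-up-bound}(b) transports the property in both directions, and the rational case follows because $h(\mathbb P^n)$ has zero base-change kernel.
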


\begin{proof}
By weak factorisation, a birational map between smooth projective varieties
over a characteristic zero field is a sequence of blow-ups and blow-downs
with smooth centres \cite[Theorem~0.0.1]{Wlodarczyk}.  A nontrivial centre in
dimension $d\leq4$ has dimension at most $d-2\leq2$.  Each of its connected
components has uniform exponent $5$, so the whole centre has a finite
uniform exponent by Lemma \ref{lem:direct-sum-bound} and Corollary \ref{cor:low-dimensional-sums}.
Apply Proposition \ref{prop:blow-up-bound} at each step and use the fact that projective
space has zero base-change kernel.
\end{proof}

\begin{proposition}
\label{prop:generically-finite-morphism}
Let $f:Y\to X$ be a dominant generically finite morphism of degree $n$
between smooth projective integral varieties of the same dimension.  Let
$R$ be a coefficient ring in which $n$ is invertible.  If $s$ is a uniform
Rost exponent for $h(Y)$ with coefficients in $R$, then $s$ is a uniform
Rost exponent for $h(X)$ with coefficients in $R$.
\end{proposition}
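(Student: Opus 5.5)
The plan is to show that $h(X)$ is a direct summand of $h(Y)$ with $R$-coefficients, and then invoke Corollary~\ref{cor:low-dimensional-sums}(a).

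\emph{Step 1: the retraction.} Let $\Gamma_f\in\CH^{\dim X}(Y\times X;R)$ be the graph of $f$, viewed as a morphism $h(Y)\to h(X)$. Let ${}^t\Gamma_f$ be its transpose, a morphism $h(X)\to h(Y)$; the source and target have the same dimension, so there is no Tate twist. By the projection formula,
\[
 \Gamma_f\circ{}^t\Gamma_f=(f\times f)_*[\Delta_Y]\ \text{restricted appropriately}=f_*f^*=n\cdot\Delta_X .
\]
More precisely, this composite is the correspondence $(f\times f)_*\Delta_Y=(\id\times f)_*\Gamma_f^t$, whose push-forward to $X\times X$ is $\deg(f)\,\Delta_X$. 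This is the standard identity $f_*f^*=\deg f$ on Chow motives (Fulton, Example~16.1.13 / Prop.~16.1.2). The identity holds for generically finite maps and not only finite ones, because $f_*[Y]=n[X]$ and the correspondence composition only uses proper push-forward and flat pull-back on products.

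Since $n\in R^\times$, put $u={}^t\Gamma_f$ and $v=n^{-1}\Gamma_f$. Then $vu=\id_{h(X)}$, so $e=uv$ is an idempotent of $\End(h(Y))$ with $(h(Y),e)\simeq h(X)$.

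\emph{Step 2: conclude.} These constructions are compatible with base change to any $E/k$. By Corollary~\ref{cor:low-dimensional-sums}(a), the corner-algebra identification gives $I_E(h(X))^s=0$ whenever $I_E(h(Y))^s=0$. To see this directly, take $\alpha_1,\dots,\alpha_s\in I_E(h(X))$. Then $u\alpha_iv\in I_E(h(Y))$, and
\[
 u\alpha_1\cdots\alpha_s v=(u\alpha_1v)\cdots(u\alpha_sv)=0 .
\]
Composing with $v$ on the left and $u$ on the right gives $\alpha_1\cdots\alpha_s=0$.

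\emph{Main obstacle.} The main obstacle is justifying $\Gamma_f\circ{}^t\Gamma_f=n\Delta_X$ for a generically finite (not necessarily flat) $f$. I would handle this with Fulton's composition formula. Both ${}^t\Gamma_f$ and $\Gamma_f$ are graphs, so the composite equals $(f\times f)_*(\Delta_Y)$, where $\Delta_Y\subset Y\times Y$. The morphism $f\times f$ maps $\Delta_Y$ onto $\Delta_X$ with degree $n$, so the push-forward is $n[\Delta_X]$. The required transversality holds automatically for graphs: the relevant intersection of $\Gamma_f\times X$ and $Y\times{}^t\Gamma_f$ in $X\times Y\times X$ is the graph of $(f,\id,f)$, which has the expected dimension.
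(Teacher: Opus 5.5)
Your proposal is correct and follows the paper's proof. In both, the graph and its transpose give $\Gamma_f\circ{}^t\Gamma_f=n\,\Delta_X$, so after inverting $n$ the motive $h(X)$ is a retract of $h(Y)$, and the corner-algebra argument transfers the exponent $s$. Your computation of the composite as $(f\times f)_*[\Delta_Y]$ and the identity $(u\alpha_1v)\cdots(u\alpha_sv)=u\alpha_1\cdots\alpha_s v$ just spell out steps the paper leaves implicit.
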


\begin{proof}
The graph and transpose graph define morphisms $f_*:h(Y)\to h(X)$ and
$f^*:h(X)\to h(Y)$.  The projection formula and $f_*[Y]=n[X]$ give
\[
 f_*f^*=n\id_{h(X)}.
\]
After inverting $n$, the motive $h(X)$ is therefore a direct summand of
$h(Y)$.  The result follows from the corner algebra argument once again.
\end{proof}

\begin{corollary}\label{cor:generically-finite}
Let $d\leq4$ and let $Y\dashrightarrow X$ be a dominant generically finite
rational map of degree $n$ between smooth projective integral $d$-folds over
a field of characteristic zero.  If $Y$ has uniform strong Rost nilpotence
with integral coefficients, then $X$ has uniform strong Rost nilpotence with
coefficients in $\mathbb Z[1/n]$.  In particular, the conclusion holds for a
unirational $X$ parametrised by $\mathbb P^d\dashrightarrow X$ with degree
$n$.
\end{corollary}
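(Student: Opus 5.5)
The plan is to reduce the statement to \cref{prop:generically-finite-morphism} by resolving the indeterminacy of the rational map, and to use \cref{cor:birational-four} to transport uniform strong Rost nilpotence from $Y$ to the resolved source.

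First I resolve $\phi\colon Y\dashrightarrow X$. Since $\operatorname{char}k=0$, Hironaka's resolution of indeterminacy gives a sequence of blow-ups of $Y$ along smooth centres, producing a smooth projective integral $d$-fold $Y'$ with a birational morphism $Y'\to Y$ and a morphism $f\colon Y'\to X$ extending $\phi$. Alternatively, one may take $Y'$ to be a resolution of the closure of the graph of $\phi$. The morphism $f$ is dominant and generically finite of degree $n$, because it agrees with $\phi$ on a dense open subset and $k(Y')=k(Y)$.

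Next I check that $Y'$ has uniform strong Rost nilpotence with integral coefficients. Since $Y'$ is birational to $Y$ and $d\leq4$, this follows directly from \cref{cor:birational-four}. If the resolution is performed by smooth blow-ups, one could instead apply \cref{prop:blow-up-bound} step by step, using that the centres have dimension at most $2$ and therefore uniform exponents by \cref{cor:low-dimensional-sums}.

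The step needing the most care is the change of coefficients, because the uniform exponent is known over $\mathbb Z$ and \cref{prop:generically-finite-morphism} needs it over $R=\mathbb Z[1/n]$. I would argue as follows. Chow groups with $\mathbb Z[1/n]$-coefficients are the localisation $\CH\otimes\mathbb Z[1/n]$, and localisation is exact. Hence
\[
I_E(h(Y');\mathbb Z[1/n])=I_E(h(Y');\mathbb Z)\otimes\mathbb Z[1/n].
\]
Any element of the left side has the form $x/n^m$ with $x$ in the integral kernel. Therefore a product of $s$ elements of the localised kernel equals a product of $s$ integral kernel elements divided by a power of $n$, and this product vanishes. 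So the same integral exponent $s$ works over $\mathbb Z[1/n]$.

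Now \cref{prop:generically-finite-morphism}, applied to $f\colon Y'\to X$ with $R=\mathbb Z[1/n]$, shows that $s$ is a uniform Rost exponent for $h(X)$ with coefficients in $\mathbb Z[1/n]$. For the unirational case, take $Y=\mathbb P^d$. The motive of $\mathbb P^d$ is a sum of Tate motives, so its base-change kernel is zero, and it has uniform exponent $1$. The conclusion for $X$ then follows from the general case.
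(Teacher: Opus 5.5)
Your proposal is correct and follows the paper's own proof. Both resolve $Y\dashrightarrow X$ to a degree-$n$ generically finite morphism from a smooth projective model birational to $Y$, transport uniform strong Rost nilpotence to that model via \cref{cor:birational-four}, pass to $\mathbb Z[1/n]$ coefficients by flat localisation, and conclude with \cref{prop:generically-finite-morphism}. Your explicit description of the localised kernel simply spells out the paper's one-line remark on flat localisation.
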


\begin{proof}
Resolve the rational map to a generically finite morphism
$f:\widetilde Y\to X$.  By Corollary \ref{cor:birational-four}, $\widetilde Y$ has
uniform strong Rost nilpotence.  Flat localisation of the coefficient ring
preserves the kernel relation and its nilpotence.  Apply Proposition
\ref{prop:generically-finite-morphism} with $R=\mathbb Z[1/n]$.
\end{proof}

\section{An ideal-level refined unramified detector}

Throughout this section, Chow groups and Chow motives have integral
coefficients.  We recall only the further notation needed here.  For a
noetherian scheme $Y$, let $F_jY$ denote the pro-open subset consisting of points of
codimension at most $j$, and set
\[
 H^i(F_jY,A(r))=
 \varinjlim_{U\supset F_jY}H^i(U,A(r)).
\]
The $j$-th refined unramified cohomology group is
\begin{equation}\label{eq:refined-unramified-definition}
 H^i_{j,\mathrm{nr}}(Y,A(r))=
 \operatorname{im}\bigl(H^i(F_{j+1}Y,A(r))
 \longrightarrow H^i(F_jY,A(r))\bigr);
\end{equation}
see Schreieder \cite[Definition~5.1]{Schreieder}.  We use étale cohomology
with $A=\QlZl$.

Let $X$ be a smooth projective equidimensional scheme of dimension $d\geq3$
over a field of characteristic zero, and put $Y=X\times X$.  For every
prime $\ell$, Kok--Zhou construct a correspondence-equivariant exact sequence
\begin{multline}\label{eq:KZ-exact}
 H^{2d-2}_{d-3,\mathrm{nr}}(Y,\QlZl(d))
 \xrightarrow{\rho_\ell}
 \CH^d(Y)[\ell^\infty]
 \xrightarrow{\lambda_\ell}
 Q_\ell\\
 \longrightarrow
 H^{2d-1}_{d-2,\mathrm{nr}}(Y,\QlZl(d)),
\end{multline}
where
\[
 Q_\ell=
 \frac{H^{2d-1}(Y,\QlZl(d))}{M^{2d-1}_\ell(Y)}.
\]
Here
\[
 \begin{aligned}
 M^{2d-1}_\ell(Y)=\operatorname{im}\bigl(&
 H^{2d-1}_{\mathrm L}(Y,\mathbb Q_\ell(d))\\
 &\longrightarrow H^{2d-1}_{\mathrm L}(Y,\QlZl(d))\bigr),
 \end{aligned}
\]
where the subscript ''$\mathrm L$'' denotes Lichtenbaum
cohomology; see \cite[Definition~10.1]{MazzaVoevodskyWeibel} or \cite[Section~3]{RosenschonSawant}.  We use the canonical comparison 
$H^{2d-1}_{\mathrm L}(Y,\QlZl(d))\simeq
H^{2d-1}(Y,\QlZl(d))$ from \cite[Theorem~10.2]{MazzaVoevodskyWeibel} to regard this image as a subgroup of the numerator of
$Q_\ell$.  Exactness and equivariance are
\cite[Proposition~4.5 and Lemma~4.6]{KokZhou}.  A
correspondence $\gamma$ on $X$ acts on $Y$ through
$\gamma\times\Delta_X$; on $\CH^d(Y)=\End(h(X))$ this is right composition.

\begin{theorem}\label{thm:ideal-KZ}
Let $X$ be a smooth projective equidimensional scheme of dimension $d\geq3$
over a field $k$ of characteristic zero.  With integral coefficients, let
$L/k$ be a finite extension and put
\[
 J=\Ker\bigl(\End_k(h(X))\longrightarrow\End_L(h(X_L))\bigr).
\]
The following are equivalent.
\begin{enumerate}[label=\textup{(\roman*)}]
\item The ideal $J$ is nilpotent.
\item There is an integer $N\geq1$, independent of $\ell$, such that every
      element of $J^N$ acts trivially on
      \[
       H^{2d-2}_{d-3,\mathrm{nr}}
       (X\times X,\QlZl(d))
      \]
      for every prime $\ell$.
\end{enumerate}
If (ii) holds, then
\[
 J^{N+2d+1}=0.
\]
\end{theorem}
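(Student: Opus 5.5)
The implication (i)$\Rightarrow$(ii) is immediate. If $J^s=0$, take $N=s$: then $J^N=0$, and the zero element acts trivially on every group. The content is the quantitative bound in (ii)$\Rightarrow$(i). The plan is to push a product of $N+2d+1$ elements of $J$ through the Kok--Zhou sequence \eqref{eq:KZ-exact}, and then through a Hochschild--Serre filtration on the middle terms. The composite $w=\alpha_1\cdots\alpha_{N+2d+1}$ is regarded as right composition $w=\Delta_X\cdot w$, that is, as the action of $w\times\Delta_X$ on the class $\Delta_X\in\CH^d(Y)$. We peel off factors from the right, using right composition, which is the action in the equivariant sequence.

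\emph{Step 1: reduction to torsion.} Let $L'/k$ be the Galois closure of $L/k$, of degree $n$; then $J$ is contained in the corresponding kernel for $L'$. A transfer argument shows $n\alpha=\pi_*\pi^*\alpha=0$ for every $\alpha\in J$. Hence every element of $J$, and every product $\Delta_X\cdot\alpha_1$, lies in $\CH^d(Y)[n]\subseteq\bigoplus_{\ell\mid n}\CH^d(Y)[\ell^\infty]$. I would then work one prime $\ell$ at a time. This is harmless: $N$ is independent of $\ell$, and the final bound $N+2d+1$ does not depend on the number of primes. After one factor, the class is an $\ell$-primary torsion cycle $z_\ell$, so the budget is $1$ factor for this step.

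\emph{Step 2: killing $\lambda_\ell$.} Every element of $J$ vanishes over $L'$. So $\lambda_\ell(z)$ lies in the part of $Q_\ell$ that dies under restriction to $Y_{L'}$. I would filter $H^{2d-1}(Y,\QlZl(d))$ by the Hochschild--Serre spectral sequence for $Y_{L'}\to Y$, or for $\bar k$, following Kok--Zhou and \cite[Lemma~1.4]{Gille2018}. Its length is at most $2\dim Y=4d$, but the relevant graded pieces should be at most $2d-1$: they are $H^p(\operatorname{Gal},H^q)$ with $q$ ranging over the nonvanishing degrees compatible with the torsion quotient. Each element of $J$ acts on $H^q(Y_{L'},-)$ as zero, since it vanishes over $L'$. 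It therefore shifts the filtration by one step, as in the refined Rosenschon--Sawant count. This is the step I expect to be the main obstacle: one must get the count to be exactly $2d$ more factors (together with Step 1, $2d+1$), and check that $M_\ell^{2d-1}$, which is $\ell$-divisible, is compatible with the filtration. My first attempt would be to use the filtration induced on $Q_\ell$ from the coniveau and weight bounds in degree $2d-1$ on $X\times X$, which gives $2d$ nonzero graded pieces. The result is that $\lambda_\ell$ of the class after $2d+1$ factors vanishes.

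\emph{Step 3: the refined unramified group.} By exactness, the class $z'$ obtained after $2d+1$ factors is $\rho_\ell(h)$ for some $h\in H^{2d-2}_{d-3,\mathrm{nr}}(Y,\QlZl(d))$. By equivariance (\cite[Lemma~4.6]{KokZhou}), applying the remaining $N$ factors gives $z'\cdot\beta=\rho_\ell(\beta^*h)$ with $\beta\in J^N$. By (ii), $\beta^*h=0$. Hence the $\ell$-primary part of $w$ vanishes for each $\ell$. Combining over the finitely many $\ell\mid n$ yields $w=0$, that is, $J^{N+2d+1}=0$. The ordering matters: factors are consumed from the right, first for torsion, then for $\lambda_\ell$, last for the detector, and multilinearity comes for free since each step uses only that individual factors lie in $J$.
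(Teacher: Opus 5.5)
Your proposal has the same architecture as the paper's proof. One factor is spent on torsion via the transfer identity $nJ=0$ and a primary decomposition of the first factor. Then $2d$ factors kill $Q_\ell$ through the Hochschild--Serre filtration for $Y_{L'}\to Y$. The last $N$ factors are absorbed by exactness and equivariance of \eqref{eq:KZ-exact} together with (ii). The trivial direction (i)$\Rightarrow$(ii) and the order in which the factors are used also agree.

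The one real gap is Step~2. You leave it open as the ``main obstacle'', and the justifications you offer miss the point. A length bound $2\dim Y=4d$ is not the relevant constraint. Degrees ``compatible with the torsion quotient'' are not the reason either. The fallback to coniveau/weight filtrations on $Q_\ell$ should be discarded, because you have no mechanism making elements of $J$ shift those filtrations.

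The count is in fact immediate. On the single group $H_\ell=H^{2d-1}(Y,\QlZl(d))$, the Hochschild--Serre filtration satisfies $F^pH_\ell/F^{p+1}H_\ell\cong E_\infty^{p,2d-1-p}$. This vanishes for $p\geq 2d$ simply because $q=2d-1-p$ would be negative. So $F^{2d}H_\ell=0$, and there are exactly $2d$ graded pieces.

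Let $\gamma\in J$. Since $\gamma_{L'}=0$ as a Chow correspondence, the induced map on $E_2^{p,q}=H^p(G,H^q(Y_{L'},\QlZl(d)))$, with $G=\operatorname{Gal}(L'/k)$, is zero. Hence the map on $E_\infty$ is zero too, i.e.\ $(F^pH_\ell)\gamma\subseteq F^{p+1}H_\ell$, and so $H_\ell J^{2d}=0$.

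No compatibility of $M^{2d-1}_\ell(Y)$ with the filtration is needed. That subgroup is correspondence-stable, so the action descends to $Q_\ell$, and killing all of $H_\ell$ kills the quotient: $Q_\ell J^{2d}=0$. This applies to $\lambda_\ell(x)$ for every $x\in\CH^d(Y)[\ell^\infty]$, in particular to the $\ell$-primary components of the first factor. Your remark that $\lambda_\ell(z)$ dies over $L'$ therefore plays no role. With this fix, Steps~1 and~3 go through as you wrote them and give $J^{N+2d+1}=0$.
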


\begin{proof}
Since $\operatorname{char}k=0$, the finite extension $L/k$ is separable and
is contained in a finite Galois extension $L'/k$.  Put
$G=\operatorname{Gal}(L'/k)$, and let $A_\ell=\QlZl(d)$.  Every element of
$J$ vanishes after base change to $L'$.  The
Hochschild--Serre spectral sequence
\[
 E_2^{p,q}=H^p\bigl(G,H^q(Y_{L'},A_\ell)\bigr)
 \Longrightarrow H^{p+q}(Y,A_\ell)
\]
induces on $H_\ell=H^{2d-1}(Y,A_\ell)$ a finite filtration
\[
 H_\ell=F^0H_\ell\supseteq F^1H_\ell\supseteq\cdots
 \supseteq F^{2d}H_\ell=0.
\]
The standard construction of this spectral sequence is compatible with the
correspondence action; see \cite[Remark~3.2]{RosenschonSawant}.  If
$\gamma\in J$, then $\gamma_{L'}=0$ as a Chow correspondence; hence it acts
as zero on the $E_2$-page, and therefore as zero on the $E_\infty$-page and
on every associated graded piece.  It follows that
\[
 (F^pH_\ell)\gamma\subseteq F^{p+1}H_\ell,
 \quad H_\ell J^{2d}=0.
\]
The subgroup $M^{2d-1}_\ell(Y)$ is correspondence-stable, so
$Q_\ell J^{2d}=0$ as well.

Let $x\in\CH^d(Y)[\ell^\infty]$ and $a\in J^{2d}$.  Equivariance of
\eqref{eq:KZ-exact} gives $\lambda_\ell(xa)=0$, so exactness gives
$xa=\rho_\ell(z)$ for some element $z$ of the critical refined unramified
group.  If $b\in J^N$, hypothesis (ii) and equivariance give
\[
 xab=\rho_\ell(zb)=0.
\]
Thus
\begin{equation}\label{eq:torsion-annihilation}
 \CH^d(Y)[\ell^\infty]J^{N+2d}=0
 \qquad\text{for every }\ell.
\end{equation}

Put $n=[L':k]$.  Restriction to $L'$ followed by corestriction is
multiplication by $n$, and hence $nJ=0$.  In a product of $N+2d+1$ elements
of $J$, regard the first factor as a torsion cycle in $\CH^d(Y)$ and
decompose it into its
finitely many primary components.  Equation
\eqref{eq:torsion-annihilation} kills each component after multiplication by
the remaining $N+2d$ factors.  Therefore $J^{N+2d+1}=0$.

Finally, if $J^s=0$, then $J^s$ acts trivially on every functorial detector,
so (ii) holds with $N=s$.
\end{proof}

\begin{remark}
1.) For an individual element of $J$, Kok--Zhou prove that nilpotence is
equivalent to nilpotence of its action on the groups in
\cref{thm:ideal-KZ}, with a bound independent of $\ell$
\cite[Proposition~4.7]{KokZhou}.  The additional and interesting content of
\cref{thm:ideal-KZ} is the passage from individual elements to the whole
kernel ideal and the explicit exponent $N+2d+1$.

2.) Chow-weight homology provides a natural framework for producing, in higher
dimensions, the support-effectivity bounds required by Lemma 
\ref{lem:sharp-Rost-filtration}: its vanishing controls effectivity of
weight complexes, while smooth weight structures encode higher
birationality filtrations
\cite{BondarkoKumallagovCWH,BondarkoKumallagovBirational}.
What is still missing is a multiplicative refinement converting this
effectivity into uniform annihilation bounds for products of
correspondences.  Such a refinement would yield higher-dimensional
analogues of \cref{thm:ideal-RNZC-threefold}.
\end{remark}

\begingroup
\setlength{\emergencystretch}{2em}

\endgroup

\end{document}